\newcommand{\ignore}[1]{}
\newcommand{\snorm}[1]{\lVert {#1} \rVert}
\newcommand{\R}{{\mathbb{R}}}
\newcommand{\bP}{{\mathbb{P}}}
\newcommand{\sC}{{\mathcal{C}}}
\newcommand{\sH}{{\mathcal{H}}}
\newtheorem{thm}{Theorem}[section]
\newtheorem{prop}[thm]{Proposition}
\newtheorem{cor}[thm]{Corollary}
\newtheorem{lemma}[thm]{Lemma}
\theoremstyle{definition}
\newtheorem{defn}[thm]{Definition}
\newtheorem{example}[thm]{Example}
\theoremstyle{remark}
\author{Orest Bucicovschi}
\thanks{The first author was supported in part by DARPA QuEST grant
N66001-09-1-2025.}
\address{Department of Mathematics, University of California at
San Diego, La Jolla, CA 92093-0112, USA}
\email{obucicov@math.ucsd.edu}
\author{Ji\v{r}\'i Lebl}
\thanks{The second author was supported in part by NSF grant DMS 0900885.}
\address{Department of Mathematics, University of Wisconsin-Madison, 
Madison, WI 53706, USA}
\curraddr{Department of Mathematics, Oklahoma State University,
Stillwater, OK 74078, USA}
\email{lebl@math.okstate.edu}
\date{September 12, 2013}
\title{On the continuity and regularity of convex extensions}
\begin{document}


\begin{abstract}
We study continuity and regularity of convex extensions of functions from a
compact set $C$ to its convex hull $K = \operatorname{co}(C)$.
We show that if $C$
contains the relative boundary of $K$, and $f$ is a
continuous convex function on $C$, then $f$ extends to
a continuous convex function on $K$ using the standard convex
roof construction.  In fact, a necessary and sufficient
condition for $f$ to extend from any set to a continuous
convex function on the convex hull is that $f$ extends to a continuous
convex function on the relative boundary of the convex hull.
We give examples showing that the hypotheses in the results
are necessary.  In particular, if $C$ does not contain the entire
relative boundary of $K$, then there may not exist any continuous
convex extension of $f$.
Finally, when the boundary of $K$ and $f$
are $C^1$ we give a necessary and sufficient condition 
for the convex roof construction to be $C^1$ on all of $K$.
We also discuss an application of the convex roof construction
in quantum computation.
\end{abstract}

\maketitle



\section{Introduction} \label{section:intro}

Extending convex functions is a problem with applications from economics
\cite{peterswakker} to quantum computing \cites{uhlmann1,uhlmann2,Wootters}
(see also \S~\ref{section:apptoquant}) and in a wide
variety of other optimization problems.
A classical theorem of Gale, Klee, and Rockafellar \cite{GKR} is that
given a convex domain, then every bounded
convex function defined on the interior extends continuously
to the boundary if and only if the domain is a polyhedron.  In this paper
we look at the opposite problem of extending a function from the boundary
to the interior.  For the set of extreme points this was also studied by
Lima~\cite{Lima}.

Given a convex function defined on a set, there exists a 
common construction called the \emph{convex roof} that defines
the largest convex extension to the convex hull of the original set.
In this paper we study 
the continuity and regularity properties of this construction.
Background in convex analysis is taken from the book
\cite{Rockafellar:book}.

For a convex set $K \subset \R^d$, let
$\operatorname{ri}(K)$ denote the \emph{relative
interior} of $K$.  That is, the topological interior
of $K$ inside the affine hull of $K$.
Then let $\operatorname{bd}(K) =  \overline{K}
\setminus \operatorname{ri}(K)$
denote the \emph{relative boundary}.
If $f \colon C \to \R$ is a function, where $C \subset \R^d$ is not
necessarily convex,
we say that $f$ is \emph{convex} if
\begin{equation}
f(x) \leq \sum_{j=1}^k t_j f(x_j) ,
\end{equation}
for any finite collection $x, x_1,\ldots,x_k \in C$ and
$t_1,\ldots,t_k \in [0,1]$, such that $\sum_j t_j = 1$ and
$\sum_j t_j x_j = x$.

\begin{thm} \label{thm:mainthm}
Let $K \subset \R^d$ be a compact convex set
and $C \subset K$ be a closed subset
such that $\operatorname{bd}(K) \subset C$.
If $f \colon C \to \R$
is continuous and convex
then $f$ extends using the convex roof
construction to a continuous convex
function on $K$.
\end{thm}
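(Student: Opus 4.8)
The plan is to work directly with the convex roof $F\colon K\to\R$ given by
\[
F(x) := \inf\Bigl\{ \sum_{j=1}^{N} t_j\, f(x_j) \ :\ N\in\N,\ x_j\in C,\ t_j\ge 0,\ \sum_j t_j=1,\ \sum_j t_j x_j = x \Bigr\},
\]
and to prove its continuity in three stages. The first stage is routine bookkeeping. Since $\partial K\subset C$ and the convex hull of the relative boundary of a compact convex set of positive dimension is the whole set (the zero--dimensional case being trivial), we have $K=\operatorname{co}(C)$, so every $x\in K$ admits an admissible representation and $F$ is finite--valued, with $\min_C f\le F\le\max_C f$ because $f$ is bounded on the compact set $C$. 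The function $F$ is convex by the standard argument of concatenating representations of two points, and $F=f$ on $C$: the bound $F\le f$ on $C$ is the one--term representation, while $F\ge f$ on $C$ is exactly the hypothesis that $f$ is convex on $C$. Thus $F$ is a genuine convex extension of $f$ and everything reduces to proving it is continuous.

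Second, I would show $F$ is lower semicontinuous on all of $K$. The graph $\Gamma=\{(c,f(c)):c\in C\}\subset\R^{d+1}$ is compact, hence so is $\operatorname{co}(\Gamma)$ (Carath\'eodory), and one checks that $F(x)=\min\{r:(x,r)\in\operatorname{co}(\Gamma)\}$ with the minimum attained. If $x_n\to x_0$ in $K$, the values $F(x_n)$ stay in the bounded interval $[\min_C f,\max_C f]$; passing to a subsequence realizing $\liminf_n F(x_n)$ and using that $\operatorname{co}(\Gamma)$ is closed gives $(x_0,\liminf_n F(x_n))\in\operatorname{co}(\Gamma)$, hence $F(x_0)\le\liminf_n F(x_n)$. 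Since a finite convex function is automatically continuous on the relative interior of its domain \cite{Rockafellar:book}, $F$ is continuous on $\operatorname{ri}(K)$, and it remains only to prove upper semicontinuity at each point of $\partial K$.

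The third stage is the heart of the matter, and the step I expect to be the main obstacle; it is also the only place the hypothesis $\partial K\subset C$ is used in an essential way. Fix $x_0\in\partial K$ and $\epsilon>0$, and use uniform continuity of $f$ on the compact set $\partial K$ to choose $\delta>0$ small (depending on $\epsilon$, the modulus of continuity of $f$ on $\partial K$, and $\max_C f-\min_C f$). Given $x\in\operatorname{ri}(K)$ with $\abs{x-x_0}<\delta$, let $w\in\partial K$ be the point at which the ray from $x_0$ through $x$ exits $K$; then $x=(1-s)x_0+sw$ with $s=\abs{x-x_0}/\abs{w-x_0}\in(0,1)$, and since $x_0,w\in\partial K\subset C$ this is an admissible two--term representation, so
\[
F(x)\ \le\ (1-s)f(x_0)+s f(w)\ =\ f(x_0)+s\bigl(f(w)-f(x_0)\bigr).
\]
Now I would split on the length of the chord $[x_0,w]$: if $\abs{w-x_0}$ is bounded below by a fixed threshold then $s$ is proportionally small and $s\bigl(f(w)-f(x_0)\bigr)<\epsilon$; if instead $\abs{w-x_0}$ is tiny then $w$ lies close to $x_0$, so $f(w)-f(x_0)<\epsilon$ by continuity of $f$ on $\partial K$ while $s<1$, and again $s\bigl(f(w)-f(x_0)\bigr)<\epsilon$. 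In either case $F(x)\le f(x_0)+\epsilon$, and the same bound holds trivially for $x\in\partial K$ near $x_0$ because there $F(x)=f(x)$. Hence $\limsup_{x\to x_0}F(x)\le F(x_0)$, which combined with the lower semicontinuity from the second stage shows $F$ is continuous at $x_0$; together with continuity on $\operatorname{ri}(K)$ this gives continuity of $F$ on all of $K$.
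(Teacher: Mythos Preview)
Your argument is correct. The first two stages match the paper's Proposition~\ref{prop:hatext} and Lemma~\ref{lemma:hatcontext} (your packaging of lower semicontinuity via compactness of $\operatorname{co}(\Gamma)$ is in fact a bit cleaner than the paper's sequence argument), and the reduction to upper semicontinuity at boundary points is the same.

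The difference is in the third stage. The paper factors this step through a general lemma (Lemma~\ref{lemma:bndrycont}): any convex function on $K$ that is lower semicontinuous at $p\in\partial K$ and whose restriction to $\partial K$ is continuous at $p$ is continuous at $p$. Its proof fixes an interior point $y$, projects the offending sequence $x_j\to p$ outward from $y$ to boundary points $y_j\to p$, and applies convexity in the form $(1-t)f(y_j)+tf(x_j)\le f(y)$ with $t=t_j\to\infty$, obtaining a contradiction. Your approach instead shoots the ray from $x_0$ through $x$ to the far boundary point $w$ and uses the two-term representation $x=(1-s)x_0+sw$ directly in the definition of the convex roof, then splits on whether the chord $[x_0,w]$ is long or short. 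This is more direct and avoids the contradiction, but it uses the convex-roof structure explicitly (both endpoints must lie in $C$, which is exactly where $\partial K\subset C$ enters). The paper's route is slightly more general, since Lemma~\ref{lemma:bndrycont} applies to arbitrary convex functions on $K$, not just convex roofs; for the theorem at hand the two arguments are equally good. One minor remark: you invoke uniform continuity of $f$ on $\partial K$, but your dichotomy only needs continuity of $f$ at the single point $x_0$.
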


Examples show that the theorem is optimal in the sense that
we cannot simply take any other smaller natural subset of the boundary (such
as the extreme points) and always expect the convex roof construction
to be continuous.  Lima~\cite{Lima} proves that a continuous
function defined on a closed subset of the set of extreme points
extends to a continuous convex function of the convex hull.
In particular, every continuous function on the
set of extreme points extends to a continuous convex function
on the convex hull if and only
if the set of extreme points is closed.  In this paper we are more
interested in convex extensions from sets that are larger than the extreme
points.  We are in particular interested in the convex roof
construction and its regularity.

The theorem gives us a necessary and sufficient condition for
a continuous function defined on a closed compact set $C$ to extend
continuously to the convex hull.

\begin{cor}
Let $C \subset \R^d$ be a compact set and let
$f \colon C \to \R$ be a continuous convex function.
Then $f$ extends to a continuous convex function
on the convex hull $K = \operatorname{co}(C)$ if and only
if $f$ extends to a continuous convex function
on $\operatorname{bd}(K)$.
\end{cor}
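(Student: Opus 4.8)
The plan is to deduce this directly from Theorem~\ref{thm:mainthm}. First, though, I would note that the hypothesis ``$f$ extends to a continuous convex function on $\partial K$'' must be read as the existence of a continuous convex function on $C \cup \partial K$ whose restriction to $C$ is $f$; when $C \not\subset \partial K$ this is the only sensible reading, since any convex extension of $f$ to $K$ is already constrained on $\partial K$ by its values on $C$. With that understood, both implications are short.

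For the forward direction I would simply restrict. If $g \colon K \to \R$ is a continuous convex extension of $f$, then $g$ restricted to $C \cup \partial K$ is continuous, is again convex (the defining inequality is quantified only over points of the subset, and convex combinations of such points remain in $C \cup \partial K$), and agrees with $f$ on $C$; this is the required extension.

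For the converse, set $C' = C \cup \partial K$. The key step is to verify that $C'$ satisfies the hypotheses of Theorem~\ref{thm:mainthm}: it is a subset of $K$ containing $\partial K$, and it is closed — hence compact — because $C$ is compact and $\partial K = K \setminus \operatorname{ri}(K)$ is closed, since $\operatorname{ri}(K)$ is open in the affine hull $\operatorname{aff}(K)$ and therefore equals $U \cap \operatorname{aff}(K)$ for some open $U \subset \R^d$, whence $\partial K = K \setminus U$ is closed. Taking the hypothesized continuous convex function $g$ on $C'$ with $g|_C = f$, Theorem~\ref{thm:mainthm} extends it to a continuous convex function on $K$ whose restriction to $C$ is still $f$.

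I do not expect a genuine obstacle here: the entire content sits in Theorem~\ref{thm:mainthm}, and the only things requiring a word are the correct phrasing of the hypothesis and the routine closedness check for $C \cup \partial K$. I might also append a sentence recalling that $\partial K$ cannot be replaced by a smaller distinguished subset such as the set of extreme points, referring to the paper's examples and to Lima~\cite{Lima}.
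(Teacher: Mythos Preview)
Your proposal is correct and is exactly the intended argument: the paper states this corollary immediately after Theorem~\ref{thm:mainthm} without a separate proof, and the deduction is precisely the one you give---restrict for the forward direction, and for the converse apply Theorem~\ref{thm:mainthm} to $C' = C \cup \partial K$. Your clarification that the hypothesis should be read as an extension to $C \cup \partial K$ (rather than to $\partial K$ alone) is the right reading and worth making explicit.
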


Example~\ref{noext:example} shows that it is not always possible to extend a
convex function continuously to a convex function of the boundary.

Let us give a rather interesting corollary of the main theorem.
We will also give a simple independent proof of this result.  A
set $K$ is \emph{strictly convex} if the relative boundary
$\operatorname{bd}(K)$
does not contain any intervals.  Notice
that any function defined on the boundary of a strictly convex set
is automatically convex.

\begin{cor}
Let $K \subset \R^d$ be a compact strictly convex set.
If $f \colon \operatorname{bd}(K) \to \R$
is continuous, then $f$ extends using the convex roof construction
to a continuous convex function on $K$.
\end{cor}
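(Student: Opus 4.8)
Two routes are available. For the short one: since $\partial K$ contains no intervals, every real-valued function on $\partial K$ is convex, so the assertion is exactly the case $C=\partial K$ of Theorem~\ref{thm:mainthm} (the set $\partial K$ is closed, lies in $K$, and trivially contains $\partial K$). The plan for a self-contained argument, which is the one I would actually write out, is as follows. Let $\hat f\colon K\to\R$ denote the \emph{convex roof} of $f$ relative to $\partial K$, that is, $\hat f(x)$ is the infimum of $\sum_j t_j f(x_j)$ over all finite convex combinations with each $x_j\in\partial K$ and $\sum_j t_j x_j = x$. I would first record the routine properties of $\hat f$: forming convex combinations of two representations shows $\hat f$ is convex on $K$; the trivial representation $x=x$ for $x\in\partial K$ gives $\hat f\leq f$ there; and since $K$ is compact and strictly convex, every boundary point of $K$ is an extreme point, whence $K=\operatorname{co}(\partial K)$ by the Minkowski theorem, so this infimum is over a nonempty set and $\inf_{\partial K} f\leq\hat f(x)<\infty$ for every $x\in K$. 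Applying Carath\'eodory's theorem in $\R^{d+1}$ (so as to also control the value $\sum_j t_j f(x_j)$) lets one restrict to representations with a bounded number of terms, after which compactness of $\partial K$ makes the infimum attained, or at least realized to within $1/k$ by representations whose coefficients and points lie in a fixed compact set.

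Next I would verify that $\hat f$ genuinely extends $f$: if $x_0\in\partial K$ then $x_0$ is an extreme point of $K$, so any representation $x_0=\sum_j t_j x_j$ with $x_j\in\partial K$ and $t_j>0$ must have every $x_j=x_0$; hence $\hat f(x_0)=f(x_0)$. Since a finite convex function is automatically continuous on $\operatorname{ri}(K)$, it then remains only to prove that $\hat f$ is continuous at each point $x_0\in\partial K$.

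This is the crux. Given a sequence $x_k\to x_0$ in $K$, I would choose representations $x_k=\sum_{j=1}^{N} t_j^{(k)} x_j^{(k)}$ with $x_j^{(k)}\in\partial K$, with $N$ independent of $k$, and with $\sum_j t_j^{(k)} f(x_j^{(k)})\leq\hat f(x_k)+1/k$. Passing to a subsequence, $t_j^{(k)}\to t_j$ and $x_j^{(k)}\to x_j\in\partial K$ for each $j$, so in the limit $x_0=\sum_j t_j x_j$ with $\sum_j t_j = 1$. Extremality of $x_0$ forces $x_j=x_0$ whenever $t_j>0$, and since $f$ is bounded on the compact set $\partial K$ the terms with $t_j=0$ wash out; therefore $\sum_j t_j^{(k)} f(x_j^{(k)})\to f(x_0)$, and since $\hat f(x_k)$ differs from $\sum_j t_j^{(k)} f(x_j^{(k)})$ by at most $1/k$, we get $\hat f(x_k)\to f(x_0)=\hat f(x_0)$ along the subsequence. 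A routine subsequence-of-a-subsequence argument upgrades this to convergence along the full sequence, giving continuity at $x_0$. Hence $\hat f$ is a continuous convex extension of $f$ to $K$.

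The one point that is not pure formality is the claim that every boundary point $x_0$ of a compact strictly convex set is extreme; here the plan is to take a supporting hyperplane of $K$ at $x_0$, observe that its intersection with $K$ is a compact convex face lying entirely in $\partial K$, and conclude that, since this face contains no segment, it reduces to $\{x_0\}$. The remaining ingredients — the Carath\'eodory reductions and the sequential-compactness bookkeeping — are routine, so I expect this extremality fact, together with disposing of the degenerate low-dimensional cases of $K$, to be the only places that demand care.
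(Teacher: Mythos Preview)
Your proposal is correct and matches the paper's treatment on both fronts. The short route via Theorem~\ref{thm:mainthm} is exactly how the paper derives the corollary, and your self-contained argument is precisely the ``simple independent proof'' the paper alludes to: it is the content of Lemma~\ref{lemma:contatextreme} (continuity of $\widehat f$ at extreme points via Carath\'eodory plus compactness), specialized to the situation where every boundary point is extreme.
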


It is natural to ask about regularity.  Example~\ref{noreg:example}
shows a $C^\infty$ function $f$ on a strictly convex
$\operatorname{bd}(K)$ such that no convex extension of
$f$ is Lipschitz on $K$; the derivative
must blow up when approaching the boundary.

However, the derivative blowing up at a boundary point is the worst that can
happen.  We say that a convex function $f \colon \operatorname{bd}(K) \to \R$ has a
\emph{nonvertical supporting hyperplane} at $p \in \operatorname{bd}(K)$ if there exists an
affine function $A \colon \R^d \to \R$ such that $A(x) \leq f(x)$ for all
$x \in \operatorname{bd}(K)$
and such that $A(p) = f(p)$.
This is eqivalent to $A(x) = L(x)+f(p)$, where $L$ is a subgradient of $f$
at $p$, that is, $L$ is an element of $\partial f(p)$.
If the convex roof $\widehat{f}$ is
differentiable at $p \in \operatorname{bd}(K)$, then $f$ has a nonvertical
supporting hyperplane at $p$ (the tangent hyperplane to the graph of $\widehat{f}$ at $p$).
It turns
out this is also a sufficient condition for $\widehat{f}$ to be continuously
differentiable on $K$.

\begin{thm} \label{thm:reg}
Let $K \subset \R^d$ be a compact convex set, $E \subset
\operatorname{bd}(K)$
a closed subset
such that $\operatorname{bd}(K)$ is
a $C^1$ manifold near every point of $\operatorname{bd}(K) \setminus E$.
Suppose that $f \colon \operatorname{bd}(K) \to \R$
is a convex function bounded from below
that is $C^1$ on $\operatorname{bd}(K) \setminus E$.  Then
\begin{enumerate}[(i)]
\item $\widehat{f} \colon K \to \R$ is $C^1$ on
$\operatorname{ri}(K) \setminus \operatorname{co}(E)$.
\item If $f$ has a nonvertical supporting hyperplane at $p \in
\operatorname{bd}(K)
\setminus \operatorname{co}(E)$,
then $\widehat{f}$ is differentiable at $p$.  In fact there exists
a convex neighborhood $U$ of $p$ and
a convex function $g \colon U \to \R$,
${g|}_{U \cap K} = {\widehat{f}|}_{U \cap K}$, and
${g|}_{U \cap K}$ is $C^1$.
\end{enumerate}
\end{thm}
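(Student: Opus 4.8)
The plan is to analyze the convex roof $\widehat{f}$ via its subdifferential and the Carathéodory-type representations of points of $K$ as convex combinations of boundary points. Recall the convex roof is given by
\[
\widehat{f}(x) = \inf \Bigl\{ \sum_{j=1}^{d+1} t_j f(x_j) : x_j \in \partial K,\ t_j \geq 0,\ \sum_j t_j = 1,\ \sum_j t_j x_j = x \Bigr\},
\]
and, by Theorem~\ref{thm:mainthm} applied with $C = \partial K$ (which we may do since $f$, being convex and $C^1$ off the closed set $E$ and bounded below, is continuous on $\partial K$ — this continuity should be checked first, as convexity plus boundedness forces continuity on each segment of $\partial K$ and the $C^1$ hypothesis handles the rest), the infimum is attained and $\widehat{f}$ is continuous on $K$. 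The key geometric fact I would establish is that if $x_0 \in \operatorname{ri}(K) \setminus \operatorname{co}(E)$, then in an optimal representation $x_0 = \sum t_j x_j$ all the active $x_j$ lie in $\partial K \setminus E$, so near each such $x_j$ the boundary is a $C^1$ manifold and $f$ is $C^1$ there.

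For part~(i): a standard fact about the convex roof is that at an optimal representation $x_0 = \sum_{j} t_j x_j$ with all $t_j > 0$, any affine function $A$ with $A \le \widehat f$ on $K$ and $A(x_0) = \widehat f(x_0)$ must satisfy $A(x_j) = f(x_j)$ for each active $j$; conversely the supporting affine functions of $\widehat f$ at $x_0$ are exactly the affine functions lying below $f$ on $\partial K$ and touching it at every $x_j$. Thus $\partial \widehat f(x_0)$ (the set of slopes of such $A$) is determined by the requirement that the hyperplane be tangent to the graph of $f$ at each $x_j \in \partial K \setminus E$. Since $\partial K$ is a $C^1$ manifold near $x_j$ and $f$ is $C^1$ there, the constraint "$A$ touches $f$ at $x_j$ and lies below it" pins down $A$ on the tangent space of $\partial K$ at $x_j$; collecting these constraints over enough affinely independent $x_j$ (their affine span contains $x_0$ in its relative interior, hence has the full dimension of the affine hull of $K$) forces $\partial \widehat f(x_0)$ to be a singleton. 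A convex function whose subdifferential is a singleton at every point of an open set is $C^1$ there (Rockafellar, Theorem~25.5), giving (i) on $\operatorname{ri}(K) \setminus \operatorname{co}(E)$. One must check that $\operatorname{ri}(K)\setminus \operatorname{co}(E)$ is open in the affine hull and that the "active points avoid $E$" claim is stable, i.e.\ holds in a neighborhood; the latter follows from upper semicontinuity of the optimal-representation correspondence together with closedness of $\operatorname{co}(E)$.

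For part~(ii): given the nonvertical supporting hyperplane $A$ of $f$ at $p \in \partial K \setminus \operatorname{co}(E)$, note $A \le f$ on $\partial K$ implies $A \le \widehat f$ on $K$ (since $\widehat f$ is the largest convex minorant of the extension of $f$, or directly from the defining infimum), and $A(p) = f(p) = \widehat f(p)$, so $A$ supports $\widehat f$ at $p$. To get differentiability I would show this is the *only* supporting hyperplane: the trivial representation $p = 1 \cdot p$ is optimal, and $p \notin \operatorname{co}(E)$ gives $p \notin E$, so $\partial K$ is a $C^1$ manifold near $p$ and $f$ is $C^1$ near $p$ on $\partial K$; any supporting affine $A'$ of $\widehat f$ at $p$ restricts to a supporting affine of $f$ at $p$ along $\partial K$, hence agrees with the $C^1$ differential of $f$ on the tangent space $T_p \partial K$, which has codimension one in the affine hull of $K$; the nonverticality of $A$ then removes the last degree of freedom, forcing $A' = A$. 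For the local extension statement, I would transport the problem to a half-space model: choose coordinates so that near $p$, $K$ lies on one side of the $C^1$ hypersurface $\partial K$, straighten $\partial K$ by a $C^1$ change of coordinates, and define $g$ on a full neighborhood $U$ of $p$ by reflecting/extending the boundary data $f$ using its $C^1$ jet and adding a large multiple of a convex bump supported near $p$ that vanishes to second order on $\partial K$; convexity of $g$ on $U \cap K$ can be arranged because $\widehat f - A$ is nonnegative, convex, and vanishes at $p$, so it is $o(|x-p|)$ there, and the constructed $g$ matches it to first order. The main obstacle I anticipate is exactly this last point — producing a genuinely convex (not merely $C^1$) local extension $g$ past the boundary — since naive extensions destroy convexity; the fix is to exploit that $\widehat f$ already equals the convex roof (so it has a rich supply of affine minorants from boundary data) and to build $g$ as a supremum of such affine functions over a $C^1$ family of nearby boundary points, which is automatically convex, and then verify the supremum is $C^1$ using the $C^1$-manifold and $C^1$-data hypotheses near $p$.
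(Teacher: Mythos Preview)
Your overall strategy---prove uniqueness of the subgradient via tangency constraints at boundary points and then invoke Rockafellar's Theorem~25.5---is the right one, and your final construction for part~(ii) (take $g$ to be the supremum of supporting affine functions $L_q$ over nearby $q\in\partial K$) is exactly what the paper does. But the argument for part~(i) has two genuine gaps.

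First, your claim that in an optimal representation $x_0=\sum t_j x_j$ all active $x_j$ lie in $\partial K\setminus E$ is false. The hypothesis $x_0\notin\operatorname{co}(E)$ only guarantees that \emph{not all} $x_j$ lie in $E$; some may. Worse, the infimum defining $\widehat f(x_0)$ need not be attained at all (the paper's Proposition~\ref{prop:el} works with limits of minimizing sequences for this reason). A concrete obstruction: take $K$ the unit disk, $E=\{(1,0)\}$, $f(x,y)=y^2$ on $\partial K$, and $x_0=(\tfrac12,0)$. Then $\widehat f(x_0)=0$, but every representation realizing the value $0$ must use the point $(1,0)\in E$; representations avoiding $E$ only approach the infimum in the limit.

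Second, your dimension count is wrong: the affine span of the active $x_j$ contains $x_0$ in \emph{its own} relative interior, not in $\operatorname{ri}(K)$, so it need not be full-dimensional (e.g.\ $x_0=0$ in the unit disk, written as $\tfrac12(1,0)+\tfrac12(-1,0)$). Hence collecting tangent constraints at the $x_j$ does not pin down the subgradient.

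The paper's fix is elegant and uses much less: from Proposition~\ref{prop:el} one gets a single line $\ell$ through $p$ on which every subgradient's affine function agrees with $\widehat f$. Since $p\notin\operatorname{co}(E)$, at least one endpoint $q\in\ell\cap\partial K$ lies in $\partial K\setminus E$. Any subgradient must then be tangent to the $(d-1)$-dimensional $C^1$ graph of $f$ over $\partial K$ at $q$; since $\ell$ is transverse to $\partial K$ at $q$ (because $p\in\operatorname{ri}(K)$), the tangent space together with the line $\ell$ span a full $d$-dimensional affine constraint, forcing uniqueness. So one boundary point suffices---you do not need several, nor do you need them all to avoid $E$.

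For part~(ii), your intermediate step ``the nonverticality of $A$ then removes the last degree of freedom, forcing $A'=A$'' does not work as stated: at a boundary point $p$ the subdifferential of $\widehat f$ (with inequalities required only on $K$) is typically a half-line, not a singleton, even when $\widehat f$ is smooth (e.g.\ $\widehat f\equiv 0$ on the disk, $p=(1,0)$, where every $c(x-1)$ with $c\ge 0$ is a subgradient). Differentiability at $p$ is \emph{not} equivalent to a singleton subdifferential there. The paper avoids this by first building the convex extension $g$ on a full neighborhood $U$ (as a supremum of the $L_q$, with $L_q$ chosen to minimize the normal derivative) and then checking uniqueness of the subgradient of $g$ at $p$, where $p$ is now interior to $U$. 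Your last paragraph arrives at this same construction, so once you drop the direct uniqueness attempt, part~(ii) goes through.

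Finally, you should not invoke Theorem~\ref{thm:mainthm}: $f$ is not assumed continuous on $\partial K$, and the argument for part~(i) needs neither continuity of $\widehat f$ nor attainment of the infimum---Proposition~\ref{prop:el} and the existence of subgradients at interior points are enough.
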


By $\operatorname{bd}(K)$ being a $C^1$ manifold near some $p \in
\operatorname{bd}(K)$ we mean
that there exists a neighbourhood $U$ of $p$ such that $\operatorname{bd}(K) \cap U$
is an embedded $C^1$ submanifold of $U$.

It is possible to construct examples showing that
$\widehat{f}$ need not be differentiable at points
of $\operatorname{co}(E)$.
The construction of the $g$ in the proof together with the
compactness of $K$ yields
the following corollary.

\begin{cor} \label{cor:reg}
Let $K \subset \R^d$ be a compact convex set and suppose that
$\operatorname{bd}(K)$ is a $C^1$ manifold.  Suppose that $f \colon
\operatorname{bd}(K) \to \R$
is a $C^1$ convex function that
has a nonvertical supporting hyperplane at every point of the
boundary. Then
there exists a proper convex function $g \colon \R^d \to \R$
such that ${g|}_K = \widehat{f}$ and
$\widehat{f} \colon K \to \R$ is a $C^1$ function on all of $K$.
\end{cor}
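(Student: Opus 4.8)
The plan is to invoke Theorem~\ref{thm:reg} with $E = \emptyset$ and then to produce the global extension as a supremum of the tangent affine functions to the graph of $\widehat{f}$ over $K$. Since $\partial K$ is compact and $f$ is $C^1$, $f$ is bounded from below, so Theorem~\ref{thm:reg} applies; note $\operatorname{co}(\emptyset) = \emptyset$. By part~(i), $\widehat{f}$ is $C^1$ on $\operatorname{ri}(K)$. By hypothesis $f$ has a nonvertical supporting hyperplane at every $p \in \partial K$, so part~(ii) applies at every boundary point: $\widehat{f}$ is differentiable at $p$, and there is a convex neighborhood $U_p$ of $p$ and a convex function $g_p \colon U_p \to \R$ with ${g_p|}_{U_p \cap K} = {\widehat{f}|}_{U_p \cap K}$ and ${g_p|}_{U_p \cap K}$ of class $C^1$.

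The next step is to check that these local pieces fit together. On $\operatorname{ri}(K)$ let $\nabla\widehat{f}$ denote the gradient furnished by part~(i), and near a boundary point $p$ let it denote $\nabla g_p$. On overlaps these agree, since all the functions in question coincide with $\widehat{f}$, and at $p$ itself the differentiability of $\widehat{f}$ pins down the limiting value as the derivative at $p$. Hence $\widehat{f}$ is $C^1$ on all of $K$ in the stated sense---near every point it is the restriction of a $C^1$ function---and the map $x \mapsto \nabla\widehat{f}(x)$ is a continuous $\R^d$-valued function on the compact set $K$, therefore bounded. Fix $M$ with $\snorm{\nabla\widehat{f}(x)} \le M$ and $\sabs{\widehat{f}(x)} \le M$ for all $x \in K$, and set $R = \max_{x \in K}\snorm{x}$.

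For $x \in K$ let $A_x(y) = \widehat{f}(x) + \linnprod{\nabla\widehat{f}(x)}{y - x}$. Since $\widehat{f}$ is convex on $K$ and differentiable at $x$, convexity gives $A_x(y) \le \widehat{f}(y)$ for all $y \in K$, with equality at $y = x$. Define $g(y) = \sup_{x \in K} A_x(y)$, which is convex as a supremum of affine functions. For $y \in K$ we have $g(y) \ge A_y(y) = \widehat{f}(y)$, while $A_x(y) \le \widehat{f}(y)$ for every $x \in K$, so ${g|}_K = \widehat{f}$. For arbitrary $y \in \R^d$ the estimate $\sabs{A_x(y)} \le M + M(\snorm{y} + R)$ shows $g(y) < \infty$, and $g(y) > -\infty$ trivially, so $g$ is a finite, in particular proper, convex function on $\R^d$ with ${g|}_K = \widehat{f}$.

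I expect the one genuine obstacle to be the middle step: verifying that the locally defined gradients are mutually consistent and assemble into a single continuous map on all of $K$, including $\partial K$. This is exactly the point where the hypothesis of a nonvertical supporting hyperplane at every boundary point is used---via Theorem~\ref{thm:reg}(ii)---and the resulting continuity of $\nabla\widehat{f}$ up to the boundary, which fails without it (cf.\ Example~\ref{noreg:example}), is precisely what makes the final supremum of tangent planes finite. The construction of $g$ itself and the verification that ${g|}_K = \widehat{f}$ are then routine convexity bookkeeping.
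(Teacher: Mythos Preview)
Your proposal is correct and follows essentially the same route as the paper. The paper globalizes the local construction from Lemma~\ref{lemma:reg2}: it takes $g$ to be $\widehat{f}$ on $K$ and $\sup_{q\in\partial K} L_q$ off $K$, where $L_q$ is the particular supporting affine function built there, and observes that compactness of $\partial K$ bounds the gradients of the $L_q$ uniformly, so the supremum is finite. You instead take the supremum of the tangent affine functions $A_x$ over all $x\in K$; since both constructions produce the smallest convex extension of $\widehat{f}$ to $\R^d$, they coincide, and your boundedness argument (continuity of $\nabla\widehat{f}$ on the compact $K$) is the same in spirit. Your patching step---arguing that the locally defined gradients agree on overlaps because they all coincide with $\nabla\widehat{f}$ on $\operatorname{ri}(K)$ and are then pinned down at boundary points by continuity---is a welcome elaboration of something the paper leaves implicit.
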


Surprisingly, Example~\ref{example:noc2} shows that this theorem does not
extend to $C^2$ or higher regularity.
We also remark that the $g$ we construct in the proof is the smallest convex
extension to all of $\R^d \setminus K$, and this
$g$ need not be differentiable everywhere.

The authors would like to acknowledge David Meyer for inspiring
suggestions and Jon Grice for
useful discussions.  We are grateful to David Ullrich for
pointing out a subtle typo.


\section{Convex roof extension} \label{section:convexroof}

Let $I$ be nonempty set, not necessarily finite.
Let $\R^{(I)}$ be the real vector space of $I$-tuples of real
numbers indexed by $I$, with only finitely many nonzero components:
\begin{equation}
\R^{(I)}= \{ t = \{t_i\}_{i \in I} \in \R^I : \operatorname{supp}(t) \text{
is finite } \},
\end{equation}
where $\operatorname{supp}(t)$ denotes the set $\{i \in I : 
t_i \neq 0 \}$.  Denote by $\lvert\operatorname{supp}(t)\rvert$ the
cardinality of $\operatorname{supp}(t)$.

Let $\{x_i\}_{i \in I}$ a family of elements 
of the vector space $\R^d$ and
$x \in \operatorname{co}(\{x_i\})$.
Consider the (nonempty by
hypotheses) set $\sC(x)$ of all writings of $x$ as a convex combination of 
$\{ x_i \}$: 
\begin{equation}\label{comb}
\sC(x) := \left\{\{t_i\} \in {[0,1]}^{(I)} :
\sum t_i =1  \text{ and } x = \sum t_i x_i \right\} .
\end{equation}
Of course, $\sC(x)$ depends on $x$, $I$, and $\{ x_i \}_{i\in I}$

\begin{defn}
Let $C \subset \R^d$, and let $f \colon C \to \R$.
Let $I = C$, and $\{x_i\}_{i\in I} = C$ where $x_i = i$.
We define the \emph{convex roof} of $f$ to be the function
$\widehat{f} \colon \operatorname{co}(C) \to \R \cup \{ -\infty \}$ given by
\begin{equation}
\widehat{f}(x) := \inf_{t \in \sC(x)}
\sum_{i \in I} t_i f(x_i) .
\end{equation}
\end{defn}

To make the above definition workable we need the 
classical lemma of Carath\'eodory (see e.g.\ \cite{Rockafellar:book}).
Note that $\sC(x)$ is a convex subset of the real vector space
$\R^{(I)}$.

\begin{lemma}[Carath\'eodory] \label{lemma:cara}
Every element of $\sC(x)$ is a convex combination of elements of $\sC(x)$ with
supports of cardinality at most $d+1$.
\end{lemma}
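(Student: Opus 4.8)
The plan is to reduce any given element $t \in \sC$ with support of cardinality $n > d+1$ to a convex combination of two elements of $\sC$ each with strictly smaller support, and then iterate. So fix $t = \{t_i\}_{i \in I} \in \sC$ and write $S = \supp(t) = \{i_1, \ldots, i_n\}$. We may assume all $t_{i_j} > 0$ and $n \geq d+2$. The $n$ points $x_{i_1}, \ldots, x_{i_n}$ in $\R^d$ are then affinely dependent: there exist reals $\lambda_1, \ldots, \lambda_n$, not all zero, with $\sum_j \lambda_j = 0$ and $\sum_j \lambda_j x_{i_j} = 0$. (This is the standard linear-algebra fact that any $d+2$ points in $\R^d$ are affinely dependent, applied inside the affine hull.)

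Next I would perturb $t$ along this dependence. For $s \in \R$ define $t^{(s)}$ by $t^{(s)}_{i_j} = t_{i_j} + s\lambda_j$ and $t^{(s)}_i = 0$ for $i \notin S$. Then $\sum_j t^{(s)}_{i_j} = \sum_j t_{i_j} = 1$ and $\sum_j t^{(s)}_{i_j} x_{i_j} = x$ for every $s$, so $t^{(s)}$ satisfies the two linear constraints defining $\sC$; it lies in $\sC$ exactly when all its entries are nonnegative. Since the $t_{i_j}$ are strictly positive and not all $\lambda_j$ vanish, there is a largest $s_+ > 0$ and a smallest $s_- < 0$ for which $t^{(s)}$ still has all nonnegative entries (at least one $\lambda_j$ is positive and at least one is negative because $\sum_j \lambda_j = 0$ and not all are zero). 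At $s = s_+$ and at $s = s_-$, at least one coordinate among the $t_{i_j}$ becomes zero, so $t^{(s_+)}$ and $t^{(s_-)}$ both lie in $\sC$ and have support of cardinality at most $n-1$. Finally, $t$ is a convex combination of these two: writing $t = \frac{s_+}{s_+ - s_-} t^{(s_-)} + \frac{-s_-}{s_+ - s_-} t^{(s_+)}$, one checks the coefficients are nonnegative and sum to $1$, and the combination reproduces $t$ coordinatewise because the map $s \mapsto t^{(s)}$ is affine.

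Iterating this step, after finitely many applications every element appearing is in $\sC$ with support of cardinality at most $d+1$, and $t$ is a convex combination of them (a convex combination of convex combinations is a convex combination). The only mild subtlety is the very first observation—that the points indexed by a support of size $\geq d+2$ are affinely dependent—which is immediate once one passes to the affine hull of $\{x_i\}$; everything else is the routine bookkeeping above, so I do not expect a genuine obstacle here.
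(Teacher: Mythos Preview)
Your argument is correct and is precisely the ``standard method'' the paper alludes to in its sketch: reduce an element of $\sC_{r+1}$ to a convex combination of two elements of $\sC_r$ via an affine dependence among the support points, then iterate. You have simply filled in the details the paper omits.
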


A sketch of the proof: For every $r \geq 1$, consider the set
\begin{equation}
\sC_{r}(x) :=
\left\{\{t_i\} \in {[0,1]}^{(I)}
: \lvert\operatorname{supp}(t_i)\rvert \leq r,
\sum t_i = 1  \text{ and } x = \sum t_i x_i \right\} .
\end{equation}
One shows
using the standard method (see e.g.\ \cite{Rockafellar:book}) that for every
$r \geq d+1$, any element in $\sC_{r+1}(x)$ is an average of two elements in
$\sC_r(x)$.

In particular, the set $\sC_{d+1}(x)$ is nonempty.
An easy computation obtains the following standard corollary.

\begin{cor}
Let $C \subset \R^d$, and $f \colon C \to \R$ be bounded.
As before, let $I = C$, and $\{ x_i \}_{i\in I} = C$ where
$x_i = i$.  Then
\begin{equation}
\widehat{f}(x) = \inf_{t \in \sC_{d+1}(x)}
\sum_{i \in I} t_i f(x_i) .
\end{equation}
\end{cor}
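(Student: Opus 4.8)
The plan is to establish separately the two inequalities between $\widehat{f}(x)$ and $\inf_{t \in \sC_{d+1}} \sum_{i \in I} t_i f(x_i)$. One direction is immediate: since $\sC_{d+1} \subseteq \sC$, the infimum defining $\widehat{f}(x)$ is taken over a larger set, so $\widehat{f}(x) \le \inf_{t \in \sC_{d+1}} \sum_{i \in I} t_i f(x_i)$, with no need for any hypothesis on $f$.

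For the reverse inequality I would fix an arbitrary $t \in \sC$ (note $t$ has finite support by the definition of $\R^{(I)}$) and apply the Carath\'eodory lemma (Lemma~\ref{lemma:cara}) — equivalently, iterate the ``average of two elements with strictly smaller support'' step from its proof sketch, starting from the support size of $t$ and stopping at $d+1$ — to write $t$ as a \emph{finite} convex combination $t = \sum_{j=1}^{m} \lambda_j\, s^{(j)}$ with each $s^{(j)} \in \sC_{d+1}$, $\lambda_j \ge 0$, and $\sum_{j} \lambda_j = 1$. All the index sums below then range over the finite set $\supp(t) \cup \bigcup_{j} \supp(s^{(j)})$, so interchanging them is legitimate, and
\begin{equation*}
\sum_{i \in I} t_i f(x_i) = \sum_{j=1}^{m} \lambda_j \sum_{i \in I} s^{(j)}_i f(x_i) \ge \min_{1 \le j \le m} \sum_{i \in I} s^{(j)}_i f(x_i) \ge \inf_{s \in \sC_{d+1}} \sum_{i \in I} s_i f(x_i) ,
\end{equation*}
the first inequality using $\lambda_j \ge 0$ and $\sum_j \lambda_j = 1$, and the second using $s^{(j)} \in \sC_{d+1}$ (which is nonempty, as already noted). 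Taking the infimum over $t \in \sC$ gives $\widehat{f}(x) \ge \inf_{s \in \sC_{d+1}} \sum_{i \in I} s_i f(x_i)$, and combining with the first inequality finishes the proof.

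I expect no genuine obstacle here; the one point that requires care is that Lemma~\ref{lemma:cara} produces a convex combination over possibly many — but always finitely many — elements of $\sC_{d+1}$, which is exactly what makes the reordering of the double sum in the display valid. Boundedness of $f$ is not used in that manipulation (every sum in sight is finite); its only role is to guarantee $\widehat{f}(x) \ge \inf_{y \in C} f(y) > -\infty$, so that both sides of the asserted identity are honest real numbers. In fact the same computation shows the identity remains valid in $\R \cup \{-\infty\}$ for any $f \colon C \to \R$.
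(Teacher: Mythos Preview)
Your proof is correct and is exactly the ``easy computation'' the paper alludes to without spelling out: the paper simply states that the corollary follows from Lemma~\ref{lemma:cara} by an easy computation and gives no further argument, and your two-inequality argument using the finite convex decomposition from Carath\'eodory is the intended one. Your remark on the role of boundedness is also accurate.
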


The following facts are standard and not difficult to prove.

\begin{prop} \label{prop:hatext}
Let $C \subset \R^d$ be a compact set and let
$K = \operatorname{co}(C)$.
If $f \colon C \to \R$
is bounded from below and convex, then
\begin{enumerate}[(i)]
\item $\widehat{f}$ is bounded from below (in particular real valued).
\item if $f$ is also bounded from above, then $\widehat{f}$ is bounded.
\item $f = {\widehat{f}|}_{C}$.
\item $\widehat{f}$ is convex.
\item If $g \colon K \to \R$ is convex and ${g|}_C \leq f$,
then $g \leq \widehat{f}$.
\end{enumerate}
\end{prop}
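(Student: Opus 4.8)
The plan is to verify the five items essentially in the listed order, since each builds on the previous ones. For (i), I would use the Carathéodory corollary: $\widehat{f}(x) = \inf_{t \in \sC_{d+1}} \sum t_i f(x_i)$, so every term in the infimum is a finite convex combination of at most $d+1$ values of $f$. Since $f$ is bounded from below, say $f \geq m$, each such sum is $\geq m \sum t_i = m$, hence $\widehat{f} \geq m$; this gives both that $\widehat{f}$ is bounded below and that it is real valued (never $-\infty$). For (ii), the same computation with an upper bound $f \leq M$ gives $\sum t_i f(x_i) \leq M$ for each $t \in \sC$, so $\widehat{f} \leq M$, and combined with (i) we get boundedness.

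For (iii), I need $\widehat{f}(x) = f(x)$ for $x \in C$. The inequality $\widehat{f}(x) \leq f(x)$ is immediate by taking the trivial combination $t$ supported at the single index $x$ (legitimate since $x \in C = I$). The reverse inequality $\widehat{f}(x) \geq f(x)$ is exactly the convexity hypothesis on $f$: for any $t \in \sC$ writing $x = \sum t_i x_i$ with $\sum t_i = 1$, the definition of convexity of $f$ on $C$ gives $f(x) \leq \sum t_i f(x_i)$, and taking the infimum over $t$ yields $f(x) \leq \widehat{f}(x)$. For (iv), convexity of $\widehat{f}$: given $y = \sum_{j=1}^k s_j y_j$ a convex combination of points of $K$, I would pick for each $y_j$ a near-optimal representation $y_j = \sum_i t_i^{(j)} x_i$ with $\sum_i t_i^{(j)} f(x_i) \leq \widehat{f}(y_j) + \epsilon$, then form the combined weights $u_i := \sum_j s_j t_i^{(j)}$. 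One checks $u \in {[0,1]}^{(I)}$ (finite support, since it is a finite sum of finitely-supported tuples), $\sum_i u_i = \sum_j s_j \sum_i t_i^{(j)} = 1$, and $\sum_i u_i x_i = \sum_j s_j y_j = y$, so $u \in \sC$ for the point $y$. Hence $\widehat{f}(y) \leq \sum_i u_i f(x_i) = \sum_j s_j \sum_i t_i^{(j)} f(x_i) \leq \sum_j s_j \widehat{f}(y_j) + \epsilon$, and letting $\epsilon \to 0$ gives convexity.

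Finally (v) is a one-line argument: if $g \colon K \to \R$ is convex with ${g|}_C \leq f$, then for any $x \in K$ and any $t \in \sC$ representing $x$, convexity of $g$ on $K$ gives $g(x) \leq \sum_i t_i g(x_i) \leq \sum_i t_i f(x_i)$; taking the infimum over $t \in \sC$ yields $g(x) \leq \widehat{f}(x)$. The only place requiring genuine care — the "main obstacle," such as it is — is keeping the bookkeeping on supports correct in (iv): one must confirm that the combined tuple $u$ still has finite support and hence lies in $\R^{(I)}$, which holds because only finitely many $y_j$ appear and each $t^{(j)}$ has finite support, so $\supp(u) \subset \bigcup_j \supp(t^{(j)})$ is finite. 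Everything else is a direct unwinding of the definitions together with the Carathéodory corollary.
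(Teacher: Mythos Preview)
Your proof is correct. The paper itself omits the proof entirely, simply stating that ``the following facts are standard and not difficult to prove,'' and your arguments are exactly the standard ones. One very minor remark: in (i) you invoke the Carath\'eodory corollary, but this is not needed---every $t \in \sC \subset \R^{(I)}$ already has finite support by definition, so $\sum_i t_i f(x_i) \geq m \sum_i t_i = m$ holds directly for all $t \in \sC$, not just those in $\sC_{d+1}$.
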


\begin{lemma} \label{lemma:hatcontext}
Let $C \subset \R^d$ be a compact set, and $K = \operatorname{co}(C)$.
If $f \colon C \to \R$
is bounded, lower semicontinuous, and convex, then
$\widehat{f}$ is lower semicontinuous.
\end{lemma}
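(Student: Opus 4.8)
The plan is to reduce the infimum defining $\widehat{f}$ to convex combinations with a bounded number of terms, and then run a compactness argument on those finitely many weights and base points. Since $f$ is bounded, the standard corollary to Carath\'eodory's lemma (Lemma~\ref{lemma:cara}) gives, for every $x \in K$,
\[
\widehat{f}(x) = \inf \Bigl\{ \textstyle\sum_{j=1}^{d+1} t_j f(c_j) \Bigr\},
\]
where the infimum runs over all $c_1,\dots,c_{d+1} \in C$ and $t_1,\dots,t_{d+1} \geq 0$ with $\sum_j t_j = 1$ and $\sum_j t_j c_j = x$. Moreover $\widehat{f}$ is real-valued by Proposition~\ref{prop:hatext}(ii), and $K = \operatorname{co}(C)$ is compact because $C$ is; so there is nothing to worry about regarding $\pm\infty$ or limits escaping the domain.

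Fix $x \in K$ and a sequence $x_n \to x$ in $K$; the goal is $\liminf_n \widehat{f}(x_n) \geq \widehat{f}(x)$. After passing to a subsequence we may assume $\widehat{f}(x_n) \to L := \liminf_n \widehat{f}(x_n)$. For each $n$, choose by the displayed formula points $c_1^{(n)},\dots,c_{d+1}^{(n)} \in C$ and weights $t_1^{(n)},\dots,t_{d+1}^{(n)} \geq 0$ with $\sum_j t_j^{(n)} = 1$, $\sum_j t_j^{(n)} c_j^{(n)} = x_n$, and $\sum_j t_j^{(n)} f(c_j^{(n)}) \leq \widehat{f}(x_n) + \tfrac1n$. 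The weight vectors lie in the compact standard simplex in $\R^{d+1}$ and the points lie in the compact set $C$, so after a further subsequence $t_j^{(n)} \to t_j$ and $c_j^{(n)} \to c_j \in C$ for each $j$. Passing to the limit in the constraints, $(t_j,c_j)_{j=1}^{d+1}$ is an admissible representation of $x$, so $\sum_j t_j f(c_j) \geq \widehat{f}(x)$. It remains to pass to the limit in the values: for each $j$ with $t_j > 0$, lower semicontinuity of $f$ at $c_j$ and $t_j^{(n)} \to t_j$ give $\liminf_n t_j^{(n)} f(c_j^{(n)}) \geq t_j f(c_j)$, while for each $j$ with $t_j = 0$, boundedness of $f$ forces $t_j^{(n)} f(c_j^{(n)}) \to 0 = t_j f(c_j)$. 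Since the $\liminf$ of a finite sum of bounded sequences is at least the sum of the $\liminf$s,
\[
L = \liminf_n \Bigl( \widehat{f}(x_n) + \tfrac1n \Bigr) \geq \liminf_n \sum_{j=1}^{d+1} t_j^{(n)} f(c_j^{(n)}) \geq \sum_{j=1}^{d+1} t_j f(c_j) \geq \widehat{f}(x) ,
\]
which is lower semicontinuity of $\widehat{f}$.

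I expect the last step — passing to the limit in $\sum_j t_j^{(n)} f(c_j^{(n)})$ — to be the only real obstacle: lower semicontinuity of $f$ by itself does not control this sum, since both the weights and the base points move and a term whose weight shrinks to $0$ could still carry a large value of $f$. Fixing the number of terms at $d+1$ via Carath\'eodory is what makes the limiting representation admissible, and boundedness of $f$ is what kills the degenerating terms; everything else is a routine subsequence-and-compactness argument.
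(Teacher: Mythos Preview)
Your proof is correct and follows essentially the same route as the paper: reduce to $(d{+}1)$-term convex combinations via Carath\'eodory, extract convergent subsequences by compactness of $C$ and the simplex, and pass to the limit termwise using lower semicontinuity (with boundedness of $f$ handling the terms whose weights degenerate to zero). The only cosmetic difference is that the paper tacitly assumes the infimum defining $\widehat{f}(x_i)$ is attained, whereas you use $\tfrac{1}{n}$-approximate minimizers; your version is slightly more explicit but the argument is the same.
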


The lemma follows easily from the results in \cite{Rockafellar:book},
however, the idea in the following proof will be useful and so we include
it.

\begin{proof}
Take a sequence $\{ x_i \}$ in $K$ that converges to $x \in K$.
Using Carath\'eodory's
lemma, for each  $x_i$,
find $d+1$ points $x_i^1,\ldots,x_i^{d+1} \in C$ and
$t_i^1,\dots,t_i^{d+1} \in [0,1]$ such that
$\sum_j t_i^j = 1$ and such that
$x_i = \sum_j t_i^j x_i^j$.
By compactness of $K$ and the corollary to
the Carath\'eodory's lemma we assume that
\begin{equation}
\widehat{f}(x_i) = \sum_{j=1}^{d+1} t_i^j f(x_i^j) .
\end{equation}
For any subsequence of $\{x_i\}$ we can take a further subsequence where 
$x_i^j$ and $t_i^j$ converge to $x^j$ and $t^j$ respectively
(as $K$ is compact).
\begin{equation}
\liminf_{i \to \infty} \widehat{f}(x_i) =
\liminf_{i \to \infty} 
\sum_{j=1}^{d+1} t_i^j f(x_i^j)
\geq
\sum_{j=1}^{d+1} t^j f(x^j) 
\geq
\widehat{f}(x).
\end{equation}
\end{proof}


\section{Continuity of the extension} \label{section:controof}

A point $p \in K$ ($K$ convex) is said to be \emph{extreme}
if $p = tx+(1-t)y$, $t\in[0,1]$, $x,y \in K$ implies
that either $p=x$ or $p=y$.
We always get continuity of the extension at the extreme points,
even without requiring that $f$ be defined on the entire relative
boundary.

\begin{lemma} \label{lemma:contatextreme}
Let $C \subset \R^d$ be a compact convex set and
let $K = \operatorname{co}(C)$.
If $f \colon C \to \R$ is continuous and convex and $p$
is an extreme point of $K$, then
$\widehat{f}$ is continuous at $p$.
\end{lemma}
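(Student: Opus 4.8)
The plan is to show that $\widehat{f}$ is continuous at an extreme point $p$ of $K$ by combining lower semicontinuity (Lemma~\ref{lemma:hatcontext}) with an upper semicontinuity argument tailored to the extreme point. Since $\widehat{f}$ is already lower semicontinuous on all of $K$ by Lemma~\ref{lemma:hatcontext}, and $\widehat{f}(p) = f(p)$ by Proposition~\ref{prop:hatext}(iii), it suffices to prove that $\limsup_{x \to p} \widehat{f}(x) \leq f(p)$. So the whole task reduces to constructing, for $x$ near $p$, a convex combination writing $x = \sum_j t_j x_j$ with $x_j \in C$ whose value $\sum_j t_j f(x_j)$ is close to $f(p)$.

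First I would use the key geometric fact about an extreme point: if $x \in K$ is close to $p$, then $x$ can be written as $x = (1-s) p + s y$ where $y \in K$ and $s \to 0$ as $x \to p$. This is where extremality is essential — near a non-extreme point one cannot force the ``other'' endpoint to carry a vanishing weight. To make this quantitative I would invoke Carathéodory's lemma: write $x = \sum_{j=1}^{d+1} \tau_j z_j$ with $z_j \in C$ realizing $\widehat{f}(x)$, set $y$ to be the (appropriately renormalized) convex combination of those $z_j$ with $z_j \neq p$, and let $s$ be the total weight on those terms; extremality of $p$ together with $x \to p$ and compactness forces $s \to 0$. Then I would apply $\widehat{f}$'s defining infimum to the representation $x = (1-s)p + sy$: using that $p \in C$ (as $p$ is extreme, it lies in every set whose convex hull is $K$, and in particular in $C$ since $C$ is convex and $K = \operatorname{co}(C)$ means $C = K$ here — wait, $C$ is assumed convex, so $C = K$, hence $p \in C$), and that $y \in K = \operatorname{co}(C)$, we get $\widehat{f}(x) \leq (1-s) f(p) + s\, \widehat{f}(y)$.

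The hard part will be controlling the term $s\,\widehat{f}(y)$: even though $s \to 0$, the value $\widehat{f}(y)$ must be kept bounded. This is handled by Proposition~\ref{prop:hatext}(i)–(ii): since $C$ is compact and $f$ is continuous, $f$ is bounded, so $\widehat{f}$ is bounded on $K$, say $\abs{\widehat{f}} \leq M$. Then $\widehat{f}(x) \leq (1-s) f(p) + sM$, and by continuity of $f$ at $p$ (so $f(p)$ is finite) the right-hand side tends to $f(p)$ as $s \to 0$, i.e.\ as $x \to p$. This gives $\limsup_{x\to p}\widehat{f}(x) \le f(p)$, which together with lower semicontinuity yields continuity at $p$. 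The one subtlety to nail down carefully is the claim $s \to 0$: I would argue by contradiction, extracting a subsequence $x_k \to p$ with $s_k \geq \delta > 0$ and $y_k \to y_\infty \in K$ (compactness), so that $p = \lim x_k = \lim\big((1-s_k)p + s_k y_k\big)$ forces, after passing to a further subsequence where $s_k \to s_\infty \in [\delta,1]$, the identity $p = (1-s_\infty) p + s_\infty y_\infty$, hence $p = y_\infty$; but $y_k$ is a convex combination of points of $C$ none of which equals $p$, and one must then derive a contradiction with extremality — this last step needs the observation that staying a definite distance from using $p$ keeps $y_k$ in a compact subset of $K$ on which one can separate $p$, or alternatively a direct dimension-count argument. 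Modulo this geometric lemma, the proof is short.
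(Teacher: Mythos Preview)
Your overall plan is reasonable and you have all the right ingredients, but the step you flag as ``one subtlety to nail down'' is in fact a genuine gap, and the contradiction you aim for does not materialize.

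Concretely: you take an optimal Carath\'eodory writing $x_k = \sum_j \tau_j^k z_j^k$, group the terms with $z_j^k \neq p$ into $y_k$ with total weight $s_k$, and then assert that extremality of $p$ forces $s_k \to 0$. This is false. Take for instance $C = \partial K$ to be the unit circle in $\R^2$, $p=(1,0)$, and $x_k=(1-\tfrac1k,0)$. An optimal decomposition of $x_k$ is the equal-weight average of $(\cos\theta_k,\pm\sin\theta_k)$ for a suitable small $\theta_k>0$; neither point equals $p$, so $s_k=1$ for every $k$, and $y_k=x_k\to p$. Your proposed contradiction (``$y_k$ is a convex combination of points of $C$ none of which equals $p$, yet $y_k\to p$'') is no contradiction at all: being a convex combination of points different from $p$ is not a closed condition, and extremality says nothing about limits of such combinations.

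The cure is to \emph{not} regroup. Having taken an optimal decomposition $x_k=\sum_{j=1}^{d+1} t_j^k x_j^k$ with $\widehat f(x_k)=\sum_j t_j^k f(x_j^k)$, pass (along a subsequence) to limits $t_j^k\to t^j$ and $x_j^k\to x^j\in C$. Then $p=\sum_j t^j x^j$, and extremality of $p$ gives $x^j=p$ whenever $t^j\neq 0$; continuity of $f$ then yields $\lim_k \widehat f(x_k)=\sum_j t^j f(x^j)=f(p)$. This single computation gives both the $\liminf$ and the $\limsup$ at once, so the split into lower and upper semicontinuity is unnecessary. This is exactly what the paper does.

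Two minor remarks. First, your observation that ``$C$ convex'' makes $C=K$ is correct; this is almost certainly a typo in the statement, and the argument above (and the paper's) works for any compact $C$. Second, once you have the optimal decomposition you already have an \emph{equality} $\widehat f(x_k)=\sum_j t_j^k f(x_j^k)$, so the detour through the inequality $\widehat f(x)\le (1-s)f(p)+s\widehat f(y)$ discards information for no gain.
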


\begin{proof}
As in the proof of Lemma~\ref{lemma:hatcontext} we take
a sequence $\{ x_i \}$ in $K$ that converges to $p$.
Using Carath\'eodory's lemma and its corollary, the compactness of
$K$, and taking subsequence of a subsequence we assume that
$x_i = \sum_{j=1}^{d+1} t_i^j x_i^j$, where
\begin{equation}
\widehat{f}(x_i) = \sum_{j=1}^{d+1} t_i^j f(x_i^j) .
\end{equation}
Also assume that $x_i^j$ converges to $x^j$ and $t_i^j$ converges to $t^j$.
Thus,
\begin{equation}
\lim_{i\to\infty}
\widehat{f}(x_i) =
\lim_{i\to\infty}
\sum_{j=1}^{d+1} t_i^j f(x_i^j) 
=
\sum_{j=1}^{d+1} t^j f(x^j) .
\end{equation}
We also obtain that $p = \sum t^j x^j$.
As $p$ is an extreme point this means that $x^j = p$ (or $t^j = 0$) for all $j$, therefore
$\lim \widehat{f}(x_i) = \sum t^j f(x^j) = f(p) = \widehat{f}(p)$.
\end{proof}

To obtain continuity of the extension at nonextreme points
we require the following lemma.

\begin{lemma} \label{lemma:bndrycont}
Let $K \subset \R^d$ be a compact convex set and let
$f \colon K \to \R$ be a convex function
lower semicontinuous at $p \in \operatorname{bd}(K)$,
such that
${f|}_{\operatorname{bd}(K)}$ is continuous at $p$.
Then $f$ is continuous at $p$.
\end{lemma}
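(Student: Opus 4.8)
The plan is to prove continuity at $p$ by contradiction, combining the lower semicontinuity assumption with an upper bound obtained by pushing any approaching sequence out to the relative boundary. Since $f$ is convex on the compact convex set $K$, the only way continuity can fail at $p$ is for $\limsup$ of $f$ along some sequence $x_i \to p$ to exceed $f(p)$; the lower semicontinuity already gives $\liminf_{i} f(x_i) \geq f(p)$, so it suffices to establish $\limsup_{i} f(x_i) \leq f(p)$.

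First I would reduce to the case where the affine hull of $K$ is all of $\R^d$, i.e.\ $\operatorname{ri}(K) = \operatorname{int}(K)$, by restricting attention to that affine hull (nothing is lost since $p \in \partial K$ is a relative boundary point). Next, fix a sequence $x_i \to p$ with $x_i \in K$; I may assume $x_i \in \operatorname{ri}(K)$ for all $i$, since for boundary points we can invoke the hypothesis that ${f|}_{\partial K}$ is continuous at $p$ directly. For each $i$, pick a point $b \in \operatorname{ri}(K)$ (a fixed interior reference point), and consider the ray from $b$ through $x_i$; it hits $\partial K$ at a unique point $y_i$, and we can write $x_i = (1-s_i) b + s_i y_i$ for some $s_i \in [0,1]$. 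Actually, it is cleaner to instead extend the segment from $p$ through $x_i$: write $x_i$ on the segment $[p, z_i]$ where $z_i \in \partial K$ lies on the line through $p$ and $x_i$, on the far side of $x_i$ from $p$. Then $x_i = (1 - \lambda_i) p + \lambda_i z_i$ with $\lambda_i \in [0,1]$, and since $x_i \to p$ while $z_i$ stays in the compact set $\partial K$, we get $\lambda_i \to 0$. Convexity of $f$ on $K$ gives
\begin{equation}
f(x_i) \leq (1-\lambda_i) f(p) + \lambda_i f(z_i) .
\end{equation}
Because $z_i \in \partial K$ and $f$ is bounded above on $\partial K$ (as $\partial K$ is compact and ${f|}_{\partial K}$ — being the restriction of a convex function to the compact boundary — is bounded there; if one prefers, one can first argue $f$ is bounded on $K$ using Carathéodory as in Lemma~\ref{lemma:hatcontext}), the term $\lambda_i f(z_i) \to 0$, so $\limsup_i f(x_i) \leq f(p)$, which combined with lower semicontinuity yields $\lim_i f(x_i) = f(p)$.

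The main obstacle is making the geometric "extend the segment to the boundary" step rigorous and uniform: one must ensure $z_i \in \partial K$ exists, that the decomposition $x_i = (1-\lambda_i)p + \lambda_i z_i$ is valid with $z_i \neq p$ whenever $x_i \neq p$, and crucially that $\lambda_i \to 0$. The existence of $z_i$ follows since $K$ is compact and convex, so any ray from $p$ exits $K$ through $\partial K$; the convergence $\lambda_i \to 0$ follows from $\lvert x_i - p \rvert = \lambda_i \lvert z_i - p \rvert$ together with a lower bound $\lvert z_i - p \rvert \geq \operatorname{diam}$ being bounded below — but this last point is subtle, since $z_i$ could in principle approach $p$ too. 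To handle this cleanly, I would instead not insist $z_i$ be collinear in that naive way but rather: since $p \in \partial K$, there is a supporting hyperplane at $p$; choosing coordinates so this hyperplane is $\{x_d = 0\}$ and $K \subset \{x_d \geq 0\}$ with $p$ at the origin, the $d$-th coordinate of $x_i$ tends to $0$, and I can write $x_i$ as a convex combination of $p$ and a point $z_i \in K$ with $d$-th coordinate bounded below by a fixed positive constant (obtainable as long as $\operatorname{int}(K) \neq \emptyset$, by taking $z_i$ along a fixed direction into the interior), forcing the weight $\lambda_i$ on $z_i$ to go to $0$. Then $z_i$ need not be on $\partial K$ — we just need $f(z_i)$ bounded, which holds if $f$ is bounded above on $K$. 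If instead one wants to use only the boundedness of ${f|}_{\partial K}$, take $z_i$ on the boundary ray opposite to the supporting direction; the supporting hyperplane at $p$ guarantees such a $z_i$ has $d$-th coordinate bounded away from $0$ only after a small argument, and this is the delicate geometric lemma to nail down. Once the uniform bound $\lambda_i \to 0$ is secured, the rest is the one-line convexity estimate above.
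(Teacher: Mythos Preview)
Your chosen line of attack---writing $x_i=(1-\lambda_i)p+\lambda_i z_i$ with $z_i\in\partial K$ on the ray from $p$ through $x_i$---has a genuine gap that your attempted fixes do not close. First, $\lambda_i$ need not tend to $0$: if $K$ is the closed unit disk, $p=(1,0)$, and $x_i$ is the midpoint of a chord through $p$ that becomes tangent, then $z_i\to p$ as well and $\lambda_i\equiv\tfrac12$. Second, even when $\lambda_i\to 0$ you need $f(z_i)$ bounded, but the hypotheses only give continuity of $f|_{\partial K}$ \emph{at $p$}; a real-valued convex function on a compact convex set can be unbounded above (e.g.\ the convex roof of $g(\cos\theta,\sin\theta)=1/|\theta|$, $g(1,0)=0$, on the disk), so there is nothing to control $\lambda_i f(z_i)$. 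Your supporting-hyperplane repair does not help: if $z_i$ must be collinear with $p$ and $x_i$, its $d$-th coordinate cannot be forced away from $0$; if you instead send $z_i$ in a fixed inward direction, then $x_i$ is no longer a convex combination of just $p$ and $z_i$.

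The irony is that the idea you wrote first and abandoned is exactly right, and it is what the paper does. Fix $b\in\operatorname{ri}(K)$ once and for all and let $y_i\in\partial K$ be the radial projection of $x_i$ from $b$, so that $x_i=(1-s_i)b+s_iy_i$. Radial projection from an interior point to $\partial K$ is continuous, so $y_i\to p$ and hence $f(y_i)\to f(p)$ by the boundary hypothesis; also $s_i\to 1$, and then
\[
f(x_i)\le (1-s_i)f(b)+s_i f(y_i)\longrightarrow f(p),
\]
which together with lower semicontinuity finishes the proof. The paper uses the same geometric configuration but packages it as a contradiction: assuming $f(x_j)\ge f(p)+\delta$, it extrapolates from $y_j$ through $x_j$ out to the fixed interior point and lets the coefficient $t_j\to\infty$ in the reverse convexity inequality. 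Either phrasing works; the essential point you missed is that the anchor must be a fixed \emph{interior} point, not $p$, so that the boundary projections are forced back to $p$ and only $f(b)$ together with $f|_{\partial K}$ near $p$ enters the estimate.
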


\begin{proof}
Suppose for contradiction
that $f$ is not continuous at $p \in \operatorname{bd}(K)$.
As $f$ is lower semicontinuous at $p$,
there must exist a $\delta > 0$ and a sequence $x_j \in K$ converging to $p$
such that
$f(x_j) \geq f(p) + \delta$ for all $j$.
We pick a fixed point $y \in \operatorname{ri}(K)$.
Let $y_j \in \operatorname{bd}(K)$ be the unique point in
$\operatorname{bd}(K)$ on the line through $y$ and $x_j$ such that $x_j$ lies on
the line segment $[y_j,y]$.  It is clear that $\lim y_j = p$.
As ${f|}_{\operatorname{bd}(K)}$ is continuous at $p$,
then for
large enough $j$ we have
$f(y_j) \leq f(p) + \delta/2$.

As $f$ is convex, then for $t \geq 1$ we have
\begin{equation}
(1-t)f(y_j) + t f(x_j) \leq
f\bigl((1-t)y_j + t x_j\bigr),
\end{equation}
whenever $(1-t)y_j + t x_j \in K$.
Let $t_j > 1$ be such that
$y = (1-t_j)y_j + t_j x_j$.
We have $t_j \to \infty$
as $\lim x_j = \lim y_j = p$.
For all $j$ we have
\begin{equation}
(1-t_j)f(y_j) + t_j f(x_j) \leq f(y) .
\end{equation}

For large enough $j$ we have
$f(y_j) \leq f(p) + \delta/2$ and
$f(x_j) \geq f(p) + \delta$.  Hence
\begin{equation}
f(p) + \delta/2
+ t_j \delta/2
=
(1-t_j)\bigl(f(p) + \delta/2\bigr)
+ t_j \bigl(f(p)+\delta\bigr)
\leq f(y) .
\end{equation}
Now $t_j \to \infty$ obtains a contradiction.
\end{proof}

Although we will not need it,
let us remark that the lemma is really local.  That is, compactness of $K$ is
not necessary.  We can apply it to any closed convex set $K$ by taking
a closed ball $B$ centered at $p$ and applying the theorem to
$B \cap K$.

We now have all the tools to prove the main theorem.

\begin{proof}[Proof of Theorem~\ref{thm:mainthm}]
We know that $\widehat{f}(x) = f(x)$ for $x \in C$
by Proposition~\ref{prop:hatext} and
by Lemma~\ref{lemma:hatcontext}, $\widehat{f}$ is
lower semicontinuous.
As $\widehat{f}$ is convex, it is standard that $\widehat{f}$ is continuous
on $\operatorname{ri}(K)$.
Then we apply
Lemma~\ref{lemma:bndrycont}.
\end{proof}


\section{Regularity of the convex roof} \label{section:regularity}

Let $f \colon K \to \R$ be a convex function.
Define a \emph{subgradient} at $p \in K$ to be a linear mapping $L$
such that $L(x-p) + f(p) \leq f(x)$ for all $x \in K$.
We need some classical results about derivatives of convex
functions.

\begin{thm}[See e.g.\ Theorems 25.1 and 25.5 in \cite{Rockafellar:book}]
\label{subgrad:thm}
Let $U \subset \R^d$ be an open convex set and let $f \colon U \to \R$ be a convex
function.
\begin{enumerate}[(i)]
\item
$f$ has a unique subgradient at $p$ if and only if
$f$ is differentiable at $p$.
\item
$f$ is differentiable on a dense subset $D \subset U$.
\item
The mapping $x \mapsto \nabla f(x)$ is continuous on $D$.
\end{enumerate}
\end{thm}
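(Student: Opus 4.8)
The plan is to reduce all three statements to the elementary theory of one-sided directional derivatives. \emph{Step 1 (directional derivatives and the subdifferential).} Fix $p \in U$. For $v \in \R^d$, convexity makes $t \mapsto \frac{f(p+tv)-f(p)}{t}$ nondecreasing for $t>0$, and since $p$ is interior to $U$ this quotient stays bounded below as $t \to 0^+$; hence the one-sided derivative $f'(p;v) := \lim_{t\to 0^+}\frac{f(p+tv)-f(p)}{t}$ exists and is finite, and $v \mapsto f'(p;v)$ is easily seen to be positively homogeneous and subadditive. If $L$ is a subgradient at $p$ then $L(v) \le \frac{f(p+tv)-f(p)}{t}$ for small $t>0$, so $L(v) \le f'(p;v)$; conversely, monotonicity of the quotient (evaluated at $t=1$ with $v=x-p$) shows that any linear $L$ with $L \le f'(p;\cdot)$ is a subgradient. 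By the Hahn--Banach theorem the sublinear function $f'(p;\cdot)$ dominates at least one linear functional and is the pointwise supremum of those it dominates; hence the set $\partial f(p)$ of subgradients at $p$ is nonempty, compact, and convex, with support function $f'(p;\cdot)$.

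\emph{Step 2 (proof of (i)).} If $f$ is differentiable at $p$ with differential $L$ (so $L = \nabla f(p)$), then $f'(p;v) = L(v)$ and $f'(p;-v) = -L(v)$ for all $v$; any subgradient $M$ then satisfies $M(v) \le L(v)$ and $M(-v) \le -L(v)$, forcing $M = L$, so the subgradient is unique. Conversely, if $\partial f(p) = \{L_0\}$ then, $f'(p;\cdot)$ being the support function of a single point, $f'(p;v) = L_0(v)$ for all $v$. Set $g(v) := f(p+v) - f(p) - L_0(v)$: it is convex near $0$, nonnegative (as $L_0$ is a subgradient), $g(0)=0$, and $g'(0;v)=0$ for every $v$. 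For a unit vector $u$ the quotient $g(su)/s$ is nondecreasing in $s$ and tends to $0$, so for fixed $\varepsilon>0$ the sets $V_s := \{u \in S^{d-1} : g(su) < \varepsilon s\}$ are open (by continuity of $g$), increase as $s \downarrow 0$, and cover $S^{d-1}$; compactness and nestedness yield a single $s_*$ with $V_{s_*} = S^{d-1}$, and then monotonicity gives $g(w) < \varepsilon\snorm{w}$ for all small $w$. As $\varepsilon$ was arbitrary, $g(w) = o(\snorm{w})$, i.e.\ $f$ is differentiable at $p$ with $\nabla f(p) = L_0$.

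\emph{Step 3 (proofs of (ii) and (iii)).} First record the standard local Lipschitz bound: a convex function is bounded above on any small cube around $p \in U$ (being bounded by its values at the vertices), and a convex function bounded above on a cube is Lipschitz on a smaller concentric cube, with a constant controlled by the oscillation and the gap between the cubes. So $f$ is locally Lipschitz; by Rademacher's theorem it is differentiable almost everywhere, hence its differentiability set $D$ is dense, which is (ii). For (iii), let $x_n \to x$ with $x_n, x \in D$. The gradients $\nabla f(x_n)$ lie in a bounded set (the local Lipschitz constant near $x$), so any subsequence has a further subsequence with $\nabla f(x_{n_k}) \to M$. Letting $k \to \infty$ in $\nabla f(x_{n_k})(y - x_{n_k}) + f(x_{n_k}) \le f(y)$ and using continuity of $f$ gives $M \in \partial f(x) = \{\nabla f(x)\}$, so $M = \nabla f(x)$; since every subsequence has a further subsequence with the same limit, $\nabla f(x_n) \to \nabla f(x)$.

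\emph{Main obstacle.} The routine ingredients are the monotonicity of difference quotients, the cube Lipschitz estimate, Rademacher's theorem, and the subsequence argument for (iii). The one delicate step is the converse half of (i): upgrading ``the G\^ateaux derivative at $p$ is a linear functional'' to genuine (Fr\'echet) differentiability. This is precisely where finite-dimensionality enters --- through compactness of the unit sphere in the nested-cover argument --- and where convexity (monotonicity of $g(su)/s$) does the work of turning directionwise smallness into smallness uniform over all directions.
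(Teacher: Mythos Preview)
The paper does not give its own proof of this statement; it is quoted as a classical result from Rockafellar's book (Theorems~25.1 and~25.5), so there is nothing in the paper to compare your argument against. Your proof is correct and essentially self-contained.

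One remark on (ii): appealing to Rademacher's theorem works, but is heavier than what Rockafellar does and somewhat obscures the role of convexity. A convex function of one real variable is differentiable except at countably many points; applying this on each line parallel to a coordinate axis shows that each partial derivative $\partial_j f$ exists off a set of measure zero, and then the argument you already gave in Step~2 (existence of all one-sided directional derivatives as a \emph{linear} function of $v$ forces Fr\'echet differentiability) upgrades existence of all partials to differentiability. This keeps everything inside elementary convex analysis. Your Step~2 compactness argument on $S^{d-1}$ and the closed-graph argument for (iii) are exactly the standard proofs.
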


Before we prove
Theorem~\ref{thm:reg}, let us note the following observation (see
also~\cite{uhlmann2}, although Uhlmann only considers continuous $f$).

\begin{prop} \label{prop:el}
Let $K \subset \R^d$ be a compact convex set, $f \colon \operatorname{bd}(K) \to
\R$ a convex function bounded from below, and $p \in \operatorname{ri}(K)$.
There exists a closed convex set $W \subset K$,
with $p \in W$ and $W = \operatorname{co}\bigl(W \cap
\operatorname{bd}(K)\bigr)$ with the
following property.  If $A$ is any affine function with $A(p) =
\widehat{f}(p)$ and $A(x) \leq \widehat{f}(x)$ for all $x \in K$, then
\begin{equation}
W \subset \{ x \in K : A(x)=\widehat{f}(x) \} .
\end{equation}
\end{prop}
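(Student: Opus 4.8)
The plan is to take $W$ to be the convex hull of the boundary points occurring in an \emph{optimal} representation of $p$ as a convex combination of points of $\partial K$, and then to show that this forces $\widehat f$ to be affine on $W$. With that in hand, any supporting affine function at $p$ must coincide with $\widehat f$ on $W$ by an elementary two-affine-functions comparison.

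First I would establish that the infimum in the definition of $\widehat f(p)$ is attained. Using Carath\'eodory's lemma and its corollary, a minimising sequence may be taken with supports of size at most $d+1$; by compactness of $\partial K$ one passes to a limit representation, and applying the standard value-nonincreasing reduction (the one sketched after Lemma~\ref{lemma:cara}) one obtains $p = \sum_{j=1}^{m} t_j q_j$ with $q_1,\dots,q_m\in\partial K$ affinely independent, $t_j>0$, $\sum_j t_j = 1$, and $\widehat f(p) = \sum_j t_j f(q_j)$. Set $W := \operatorname{co}\{q_1,\dots,q_m\}$. Then $W$ is a compact convex (simplicial) subset of $K$; because the $q_j$ are affinely independent and all weights are positive, $p\in\operatorname{ri}(W)$; and since each $q_j\in W\cap\partial K$ we get $W = \operatorname{co}\{q_j\}\subseteq \operatorname{co}(W\cap\partial K)\subseteq W$, i.e.\ $W = \operatorname{co}(W\cap\partial K)$. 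So $W$ has all the required structural properties.

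Next, let $L$ be the unique affine function on $\operatorname{aff}(W)$ with $L(q_j)=f(q_j)$ for all $j$. For $x=\sum_j s_j q_j\in W$ the representation of $x$ gives $\widehat f(x)\le \sum_j s_j f(q_j)=L(x)$, while optimality gives $\widehat f(p)=\sum_j t_j f(q_j)=L(p)$. Now fix $x\in W$; since $p\in\operatorname{ri}(W)$ we may write $p=\lambda x+(1-\lambda)z$ with $z\in W$, $\lambda\in(0,1)$, and convexity of $\widehat f$ (Proposition~\ref{prop:hatext}) yields
\begin{equation*}
\widehat f(p)\le \lambda\widehat f(x)+(1-\lambda)\widehat f(z)\le \lambda L(x)+(1-\lambda)L(z)=L(p)=\widehat f(p),
\end{equation*}
so every inequality is an equality and in particular $\widehat f(x)=L(x)$; hence $\widehat f=L$ on $W$. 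Finally, if $A$ is affine with $A(p)=\widehat f(p)$ and $A\le\widehat f$ on $K$, then on $W$ we have $A\le\widehat f=L$ with $A,L$ affine and $(L-A)(p)=0$; since $L-A$ is affine, $\ge 0$ on $W$, and vanishes at the relative interior point $p$, it vanishes identically on $W$ (writing $p$ as a convex combination with positive weight on any point where $L-A>0$ would contradict $(L-A)(p)=0$). Therefore $A=\widehat f$ on $W$, i.e.\ $W\subseteq\{x\in K : A(x)=\widehat f(x)\}$.

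The step I expect to be the real obstacle is the first one: since $f$ is only assumed convex and bounded below, not continuous (nor even lower semicontinuous), attainment of the infimum defining $\widehat f(p)$ is not automatic --- along a minimising sequence the chosen boundary points converge, but $f$ need not be lower semicontinuous at the limits, so the limiting representation need not be optimal. When $f$ is lower semicontinuous (in particular when it is continuous, the case of most interest) attainment is immediate from compactness of the configuration space together with lower semicontinuity of $t\mapsto\sum_i t_i f(x_i)$, and then the rest of the argument above is routine; the general case is where the care is needed, and the natural route is to replace the optimal representation by a near-optimal one and to combine a compactness/limit argument with the continuity of $\widehat f$ on $\operatorname{ri}(K)$.
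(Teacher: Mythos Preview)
Your attainment-based argument is correct and, where it applies, is actually tidier than the paper's on one point: writing $p=\lambda x+(1-\lambda)z$ with $z\in W$ gives $\widehat f=L$ on \emph{all} of $W$, including its relative boundary, in a single stroke.

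You have also correctly located the only genuine difficulty and correctly sketched its resolution. With $f$ merely convex and bounded below, the infimum defining $\widehat f(p)$ need not be attained, so an optimal representation $p=\sum t_jq_j$ may not exist. The paper does exactly what you propose: it takes a minimising sequence $p=\sum_j t_i^jx_i^j$, passes to subsequential limits $x_i^j\to x^j$, $t_i^j\to t^j$, and (crucially) $f(x_i^j)\to w_j$, sets $W=\operatorname{co}\{x^j:t^j\ne0\}$, and observes that $A(p)=\sum t^jw_j$ together with $A\le f$ along the sequence forces $A(x^j)=w_j$. Then for $x=\sum s^jx^j\in\operatorname{ri}(W)\subset\operatorname{ri}(K)$ one passes $\widehat f\bigl(\sum_j s^jx_i^j\bigr)\le\sum_j s^jf(x_i^j)$ to the limit using continuity of $\widehat f$ on $\operatorname{ri}(K)$, getting $\widehat f(x)\le\sum_j s^jw_j=A(x)$ and hence equality.

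One point to watch when you carry this out: in the non-attainment setting the role of your $L$ is played by the affine function with values $w_j$ at the $x^j$, and since it may happen that $f(x^j)>w_j$, the pointwise bound $\widehat f\le L$ is no longer available at the vertices themselves. Thus your $p=\lambda x+(1-\lambda)z$ trick does not directly transplant to the limit construction at points of $W\cap\partial K$; the limit argument as in the paper gives $A=\widehat f$ only on $\operatorname{ri}(W)$. For the applications in this paper ($f$ continuous or $C^1$, hence lower semicontinuous, so in fact attainment holds and your original argument goes through verbatim) this is immaterial, but it is worth flagging if you want the proposition in the generality stated.
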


In particular, the proposition says that there exists a line $\ell \subset
\R^d$ through $p$ such that for any subgradient $L$ of
$\widehat{f}$ at $p$, we get that
\begin{equation}
L(x-p)+\widehat{f}(p) = \widehat{f}(x)
\end{equation}
for all $x \in \ell \cap K$.

\begin{proof}
Let $p \in \operatorname{ri}(K)$.  By definition of convex roof and the
lemma of Carath\'eodory we can find sequences
$t_{i}^1, t_i^2,\ldots, t_i^{d+1} \in [0,1]$, and
$x_{i}^1, x_i^2,\ldots, x_i^{d+1} \in \operatorname{bd}(K)$ with $\sum_j t_i^j = 1$
and $p = \sum_j t_i^j x_i^j$, and such that
\begin{equation}
\widehat{f}(p) = \lim_{i\to \infty} \sum_{j=1}^{d+1} t_i^j f(x_i^j) .
\end{equation}
By compactness of $K$ we can assume that there exist $t^1,\ldots,t^{d+1} \in
[0,1]$
and $x^1,\ldots,x^{d+1} \in \operatorname{bd}(K)$ such that
$t_i^j$ and $x_i^j$ converge to $t^j$ and $x^j$ respectively as $i$ goes
to infinity.
Furthermore, the sequence $\{ f(x_i^j) \}_{i=1}^\infty$
must be bounded above and as $f$
is also bounded from below and $K$ is compact, we
can assume that the limit of $f(x_i^j)$ exists.

Write
\begin{equation}
w_j = \lim_{i\to\infty} f(x_i^j) .
\end{equation}
We see that 
\begin{equation}
\widehat{f}(p) = \sum_{j=1}^{d+1} t^j w_j .
\end{equation}
Let $C = \operatorname{co}(\{x^1,x^2,\ldots,x^{d+1}\})$.
If $A$ is an affine function such that $A(x) \leq \widehat{f}(x)$ and
$A(p) = \widehat{f}(p)$, then $A(x^j) \leq w_j$.  We can now without loss
of generality assume that $t^j \not= 0$, and therefore $p \in
\operatorname{ri}(C)$.  As $A(p) = \widehat{f}(p)$ we see that 
if $x = \sum_j s^j x^j$ (with $x \in \operatorname{ri}(C)$)
where $\sum s^j = 1$ then
\begin{equation}
A(x) = \sum_{j=1}^{d+1} s^j w^j .
\end{equation}
For any $\epsilon > 0$ then for large enough $i$ we have
\begin{equation}
\widehat{f}\Bigl(\sum_j s^j x_i^j\Bigr) \leq \sum_{j=1}^{d+1} s^j f(x_i^j) \leq
\widehat{f}\Bigl(\sum_j s^j x_i^j\Bigr) + \epsilon .
\end{equation}
As $\widehat{f}$ is continuous in $\operatorname{ri}(K)$, then
\begin{equation}
\widehat{f}(x) = A(x)
\end{equation}
for all $x \in \operatorname{ri}(K)$, and we are done.
\end{proof}

We can now prove the first part of Theorem~\ref{thm:reg}.
For this statement we can safely drop the continuity hypothesis for
the derivative.  The $C^1$ hypothesis is necessary to extend
differentiability to the boundary.  On the other hand we automatically
obtain $C^1$ differentiability in the interior by Theorem~\ref{subgrad:thm}.

\begin{lemma} \label{lemma:reg1}
Let $K \subset \R^d$ be a compact convex set, $E \subset
\operatorname{bd}(K)$
a closed set
such that $\operatorname{bd}(K)$ is
a differentiable manifold near every point of $\operatorname{bd}(K) \setminus E$.
Suppose that $f \colon \operatorname{bd}(K) \to \R$
is a convex function bounded from below
that is differentiable on $\operatorname{bd}(K) \setminus E$.
Then $\widehat{f} \colon K \to \R$ is $C^1$ on $\operatorname{ri}(K)
\setminus \operatorname{co}(E)$.
\end{lemma}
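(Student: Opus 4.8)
The plan is to show that $\widehat{f}$ has a unique subgradient at each point $p \in \operatorname{ri}(K) \setminus \operatorname{co}(E)$, and then invoke Theorem~\ref{subgrad:thm}(i) for differentiability and (iii) for continuity of the gradient. Since $\widehat{f}$ is convex and finite on the open convex set $\operatorname{ri}(K)$ (taken inside the affine hull of $K$, after an affine change of coordinates we may as well assume $K$ has nonempty interior in $\R^d$), it automatically has at least one subgradient at $p$; the content is uniqueness.

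First I would apply Proposition~\ref{prop:el} at the point $p$. It produces a closed convex set $W \subset K$ with $p \in W$, $W = \operatorname{co}(W \cap \partial K)$, and such that every subgradient $L$ of $\widehat{f}$ at $p$ satisfies $L(x-p) + \widehat{f}(p) = \widehat{f}(x)$ for all $x \in W$. Unwinding the proof of that proposition, $W$ is (or contains) the convex hull of the limit points $x^1, \ldots, x^{d+1} \in \partial K$ of an optimal sequence of Carath\'eodory representations of $p$, with all barycentric weights $t^j > 0$, so that $p \in \operatorname{ri}(W)$. The key observation now is that, because $p \notin \operatorname{co}(E)$, at least one of these boundary points — say $x^1$ — lies in $\partial K \setminus E$: otherwise $p = \sum_j t^j x^j \in \operatorname{co}(E)$, a contradiction. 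Near such an $x^1$, the boundary $\partial K$ is a $C^1$ manifold and $f$ is differentiable, so $\widehat{f}$ restricted to $\partial K$ near $x^1$ agrees with a $C^1$ function with a well-defined tangent hyperplane.

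The heart of the argument is then to show this pins down $L$ uniquely. Any subgradient $L$ of $\widehat{f}$ at $p$ gives an affine function $A(x) = L(x-p) + \widehat{f}(p)$ with $A \le \widehat{f}$ on $K$ and $A = \widehat{f}$ on $W$; in particular $A = \widehat{f} = f$ along $\partial K$ near $x^1$, and $A \le f$ on all of $\partial K$. Thus the affine function $A$ is a nonvertical supporting function for $f$ at $x^1$ that is tangent to the graph of $f$ over the $C^1$ manifold $\partial K$ at $x^1$. Since $f$ is differentiable at $x^1$ along the $(d-1)$-dimensional tangent space $T_{x^1}\partial K$, the restriction $A|_{T_{x^1}\partial K}$ is forced: its gradient in those $d-1$ directions must equal the intrinsic derivative of $f$ at $x^1$. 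This determines $d-1$ of the $d$ components of $L$. For the remaining direction — the direction transverse to $\partial K$ at $x^1$, which points into $\operatorname{ri}(K)$ — I would use that $p \in \operatorname{ri}(W)$ and that $W$ is not contained in $\partial K$ (since $p \in \operatorname{ri}(K)$): the segment from $x^1$ through $p$ stays in $W$ and exits through another point of $W \cap \partial K$, so $A$ is determined along a genuinely $d$-dimensional affine piece, hence $A$, hence $L$, is unique. Concretely: the constraint $A(x) = f(x)$ holds at $x^1$ and at the second intersection point $x^1{}' = \operatorname{co}(W\cap\partial K)$ of the line through $x^1, p$ with $\partial K$, and $A$ matches the tangent plane of $\partial K$ at $x^1$; these together fix all $d$ coordinates of $\nabla A = L$.

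The main obstacle I anticipate is the transverse direction: making rigorous that $L$ is determined in the direction normal to $\partial K$ at $x^1$, rather than only along the tangent space. One clean way is to avoid picking a ``second point'' and instead argue directly: suppose $L_1, L_2$ are two subgradients at $p$. Both restrict, on the tangent space $T_{x^1}\partial K$, to the intrinsic differential $d f_{x^1}$, so $L_1 - L_2$ annihilates $T_{x^1}\partial K$, i.e.\ $L_1 - L_2 = \lambda \nu$ for the outward conormal $\nu$ at $x^1$ and some $\lambda \in \R$. But $x^1 - p$ is a vector pointing strictly inward (since $p \in \operatorname{ri}(W)$ and $x^1 \in W \cap \partial K$), so $\langle \nu, x^1 - p\rangle \neq 0$; evaluating $A_1(x^1) = A_2(x^1) = f(x^1)$ gives $(L_1 - L_2)(x^1 - p) = 0$, forcing $\lambda = 0$. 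Hence $L_1 = L_2$, $\widehat{f}$ is differentiable at $p$ with $\nabla\widehat{f}(p) = L$, and continuity of $p \mapsto \nabla\widehat{f}(p)$ on $\operatorname{ri}(K)\setminus\operatorname{co}(E)$ follows since this set is open (as $\operatorname{co}(E)$ is compact, being the convex hull of the compact set $E$ in $\R^d$, hence closed) and $\widehat{f}$ is differentiable there, so Theorem~\ref{subgrad:thm}(iii) applies. Care is needed to handle the affine-hull reduction cleanly and to confirm that ``$\widehat{f}$ is differentiable at $p$'' in the ambient sense is what we want when $K$ is lower-dimensional — but since the statement only claims $C^1$ on $\operatorname{ri}(K)$, working inside $\operatorname{aff}(K)$ throughout is legitimate.
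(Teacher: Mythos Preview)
Your proposal is correct and follows essentially the same route as the paper: use Proposition~\ref{prop:el} to locate a boundary point $q\in\partial K\setminus E$ at which every supporting affine function for $\widehat f$ at $p$ touches $f$, observe that differentiability of $f$ there forces any subgradient to agree with $df_q$ on $T_q\partial K$, and use the segment from $p$ to $q$ (transverse to $\partial K$ because $p\in\operatorname{ri}(K)$) to fix the remaining component, then invoke Theorem~\ref{subgrad:thm}. The paper phrases the last step geometrically in $\R^{d}\times\R$, showing that the graphs $\Gamma_L$ and $\Gamma_{\widetilde L}$ both contain the $d$-dimensional affine span of the tangent plane $T$ and the lifted line $\widehat\ell$, whereas you write $L_1-L_2=\lambda\nu$ and kill $\lambda$ via $(L_1-L_2)(x^1-p)=0$; these are the same argument in different coordinates. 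One minor slip: the claim ``$A=\widehat f=f$ along $\partial K$ near $x^1$'' is not justified---$W\cap\partial K$ need not contain a $\partial K$-neighborhood of $x^1$, so you only know $A(x^1)=f(x^1)$ and $A\le f$ on $\partial K$---but since that is all your subsequent tangency argument actually uses, the proof is unaffected.
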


\begin{proof}
Let us assume that we work in the affine hull of $K$ and therefore
without loss of generality assume that the
relative interior $\operatorname{ri}(K)$ is in fact equal to the
topological interior.

Let $p \in \operatorname{ri}(K) \setminus \operatorname{co}(E)$.
Suppose that $L$ and $\widetilde{L}$
are subgradients of $\widehat{f}$ at $p$.
By Proposition~\ref{prop:el} there is a line
$\ell \subset \R^d$ (with $p \in \ell$) such that
\begin{equation}
\widehat{f}(x) = L(x-p)+\widehat{f}(p) = \widetilde{L}(x-p)+\widehat{f}(p)
\end{equation}
for all $x \in \ell$.
We can also assume 
that there exists a
$q \in \ell \cap (\operatorname{bd}(K) \setminus E)$ as $p$ was not in the convex
hull of $E$.
We notice that $\ell$ is not tangent to $\operatorname{bd}(K)$ at $q$
as $K$ is convex; if $\ell$ were tangent then $\ell \cap K \subset
\operatorname{bd}(K)$, which contradicts the fact that $p \in \operatorname{ri}(K)$.

Let $T$ denote the $(d-1)$-dimensional affine manifold in $\R^d \times \R$
through the point $(p,f(p))$,
tangent to the graph of $f$
\begin{equation}
\Gamma_f = \{ (x,f(x)) \in \R^d \times \R : x \in \operatorname{bd}(K) \}.
\end{equation}
Note that near $(p,f(p))$, $\Gamma_f$ is a $C^1$ manifold.
And define the line $\tilde{\ell} \in \R^d \times \R$ as
\begin{equation}
\widehat{\ell} = \{ (x,L(x-p)+\widehat{f}(p)) = (x,\widehat{f}(x)) \in \R^d \times \R : x
\in \ell \} .
\end{equation}
Let $W$ be the affine span of $T$ and $\widehat{\ell}$ (that is, the smallest affine
space containing both $T$ and $\widehat{\ell}$).  As $\ell$ was
not tangent to $\operatorname{bd}(K)$, then $\widehat{\ell}$ is not contained in $T$ and
therefore $W$ is $d$-dimensional.

Let $\Gamma_L$ be the graph of the mapping
$x \mapsto L(x-p) +\widehat{f}(p)$, that is
\begin{equation}
\Gamma_L = \{ (x,y) \in \R^d \times \R : y = L(x-p)+\widehat{f}(p) \} .
\end{equation}
Similarly let $\Gamma_{\widetilde{L}}$ be the graph of the mapping
$x \mapsto \widetilde{L}(x-p) +\widehat{f}(p))$.
As for $x \in
\operatorname{bd}(K)$ we have
$\widetilde{L}(x-p) + \widehat{f}(p) \leq f(x)$ and
$L(x-p) + \widehat{f}(p) \leq f(x)$ we see
that $T$ is a subset of
$\Gamma_L \cap \Gamma_{\widetilde{L}}$.
Furthermore, $\widehat{\ell} \subset \Gamma_L \cap \Gamma_{\widetilde{L}}$,
by definition of $\ell$.
Therefore, $W \subset \Gamma_L \cap \Gamma_{\widetilde{L}}$.

As $\Gamma_L$, $\Gamma_{\widetilde{L}}$, and $W$ are all $d$-dimensional
affine subspaces, we see that
\begin{equation}
\Gamma_L = \Gamma_{\widetilde{L}} = W .
\end{equation}
So $L = \widetilde{L}$ and 
$\widehat{f}$ is differentiable at $p$
by Theorem~\ref{subgrad:thm}.
\end{proof}

Let us prove the second part of the theorem.

\begin{lemma} \label{lemma:reg2}
Let $K \subset \R^d$ be a compact convex set, $E \subset
\operatorname{bd}(K)$
a closed set
such that $\operatorname{bd}(K)$ is
a $C^1$ manifold near every point of $\operatorname{bd}(K) \setminus E$.
Suppose that $f \colon \operatorname{bd}(K) \to \R$
is a convex function bounded from below that is $C^1$ on
$\operatorname{bd}(K) \setminus E$
and such that
$f$ has a nonvertical supporting hyperplane at $p \in \operatorname{bd}(K)
\setminus \operatorname{co}(E)$.

Then $\widehat{f}$ is differentiable at $p$.  In fact there exists
a convex neighborhood $U$ of $p$ and
a convex function $g \colon U \to \R$, such that
${g|}_{U \cap K} = {\widehat{f}|}_{U \cap K}$, and
${g|}_{U \cap K}$ is $C^1$.
\end{lemma}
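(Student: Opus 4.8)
The plan is to build the local convex extension $g$ explicitly near $p$ and then read off differentiability from it. Since $f$ has a nonvertical supporting hyperplane at $p$, there is an affine function $A$ with $A \leq f$ on $\partial K$ and $A(p) = f(p) = \widehat{f}(p)$; by Proposition~\ref{prop:hatext}(v) applied to the convex function $A$ on $K$, we also have $A \leq \widehat{f}$ on $K$, so $A$ is in fact a supporting affine function for $\widehat{f}$ at $p$. After an affine change of coordinates in $\R^d \times \R$ I would assume $A \equiv 0$, so that $\widehat{f} \geq 0$ on $K$ and $\widehat{f}(p) = 0$; it then suffices to show $\widehat{f}$ is differentiable at $p$ with $\nabla\widehat{f}(p) = 0$ and to produce the claimed $g$.

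The key geometric input is that $p \notin \operatorname{co}(E)$, so in the representations $p = \sum_j t^j x^j$ coming from Carath\'eodory (as in the proof of Proposition~\ref{prop:el}, but now with $p \in \partial K$), at least one $x^j$ lies in $\partial K \setminus E$; since $f$ is $C^1$ and $\partial K$ is a $C^1$ manifold there, $f$ has a well-defined tangent hyperplane at that point, which by the supporting property must coincide with the zero hyperplane. Combined with $\widehat{f} \geq 0$, this forces $\widehat{f}$ to vanish to first order along a whole neighborhood-sized chunk: concretely, I expect to show that $\widehat{f}(x) = o(\|x - p\|)$ as $x \to p$ in $K$. The mechanism is the one from Lemma~\ref{lemma:reg1}: the supporting affine function is forced to agree with $\widehat{f}$ along a segment into $\operatorname{ri}(K)$ that is transverse to $\partial K$, and convexity plus the $C^1$ flatness of $f$ at the boundary point $q \in \partial K \setminus E$ squeezes $\widehat{f}$ between $0$ and a function that is $o(\|x-p\|)$.

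For the extension $g$: on a small convex neighborhood $U$ of $p$ I would set $g$ equal to $\widehat{f}$ on $U \cap K$ and define it on $U \setminus K$ by the convex roof / infimal-convolution-type formula using the values of $f$ on $\partial K \cap U$ (this is where the $C^1$ structure of $\partial K$ near $p$ is used — locally $\partial K$ is a graph, and the convex roof over a $C^1$ graph of a $C^1$ convex function is a $C^1$ convex function on a neighborhood, mirroring Corollary~\ref{cor:reg}). Then $g$ is convex on $U$, agrees with $\widehat{f}$ on $U \cap K$, and has $\nabla g(p) = 0$ by the $o(\|x-p\|)$ estimate together with the fact that a convex function with a vanishing first-order term at a point is differentiable there with zero gradient. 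Finally, $C^1$-ness of $g$ on $U$ (hence on $U \cap K$) follows from Theorem~\ref{subgrad:thm}(iii) once we know $g$ is differentiable at every point of $U$: in the interior this is automatic, on $\partial K \cap U$ it is the content just established (applied at each such point, all of which also lie outside $\operatorname{co}(E)$ after shrinking $U$), and outside $K$ it comes from the explicit $C^1$ formula.

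The main obstacle is the estimate $\widehat{f}(x) = o(\|x - p\|)$ near $p$: I need to control $\widehat{f}$ from above near a boundary point using only the convex roof definition, and the natural upper bounds come from convex combinations whose constituent points may themselves drift toward $E$. The resolution is to use the transverse segment and the convexity inequality (as in Lemma~\ref{lemma:bndrycont}) to trap $\widehat{f}$, using that along the transverse direction $\widehat{f}$ is genuinely affine (value $0$) by Proposition~\ref{prop:el}, and that in the boundary directions the $C^1$ tangency of $f$ at $q$ gives the required first-order vanishing; a secondary technical point is ensuring the local convex-roof extension $g$ over the $C^1$ graph is itself $C^1$, which I expect to handle exactly as in the proof of Corollary~\ref{cor:reg}.
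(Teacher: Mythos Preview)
Your high-level plan---extend $\widehat{f}$ to a convex function $g$ on a full neighborhood of $p$ and then invoke Theorem~\ref{subgrad:thm}---matches the paper. But two of the key steps are not right as written.

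\emph{The extension $g$ on $U\setminus K$.} You propose a ``convex roof / infimal-convolution-type formula'' using the values of $f$ on $\partial K\cap U$. A convex roof is an infimum; taken over boundary data outside $K$ it does not obviously yield a convex function, nor does it obviously glue to $\widehat f$ along $\partial K$. The paper does the opposite: at each $q\in U\cap\partial K$ it selects the unique supporting affine function $L_q$ for $f$ that minimizes the derivative in the outward normal direction, and sets $g(x)=\sup_q L_q(x)$ for $x\in U\setminus K$. A supremum of affine functions is convex, each $L_q$ lies below $\widehat f$ on $K$, and the specific normalization of $L_q$ is exactly what makes the subgradient at $p$ unique. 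Your reference to Corollary~\ref{cor:reg} here is circular: that corollary is proved from the construction in this lemma.

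\emph{The $o(\lVert x-p\rVert)$ estimate via Proposition~\ref{prop:el}.} Proposition~\ref{prop:el} is stated and proved for $p\in\operatorname{ri}(K)$; its content is that a nontrivial Carath\'eodory decomposition of $p$ produces a segment through $p$ reaching $\partial K$. For $p\in\partial K$ the trivial decomposition $p=1\cdot p$ already realizes the infimum, so no transverse segment into $\operatorname{ri}(K)$ is produced, and the mechanism you describe does not apply. A direct $o$-estimate from the convex-roof definition is also delicate: to reach a point at normal depth $t$ using nearby boundary points one typically needs them spread at scale $\sqrt{t}$ (or it is impossible if $\partial K$ is flat near $p$), which only gives $\widehat f=o(\sqrt{t})$, not $o(t)$. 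The paper sidesteps this entirely: once $g=\sup_q L_q$ is in place, any subgradient of $g$ at $p$ must be tangent to $\Gamma_f$ at $(p,f(p))$ (from the $K$ side, since $f$ is $C^1$ there) and must have minimal normal slope (from the $U\setminus K$ side, by construction of the $L_q$), which forces it to equal $L_p$. No asymptotic estimate is needed.
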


\begin{proof}
Again without loss of generality assume that $\operatorname{ri}(K)$
is an open set in $\R^d$.  We pick a point $p \in \operatorname{bd}(K) \setminus
\operatorname{co}(E)$.  Near $p$, the graph of $f$ above
$\operatorname{bd}(K)$ is a $C^1$ manifold.  As $f$ has a
nonvertical supporting hyperplane at $p$, then it has such a hyperplane
at all points near $p$; for example, we can simply take tangent hyperplanes
to some $C^1$ manifold of dimension $d$ in $\R^d \times \R$ that contains
the graph of $f$ and is tangent to the supporting hyperplane at $p$.

Take a small convex neighborhood $U$ of $p$, such that
$U \cap \operatorname{bd}(K)$ is a connected $C^1$ manifold and such
that $f$ has a nonvertical supporting hyperplane at each
$q \in U \cap \operatorname{bd}(K)$.
At each $q \in U \cap \operatorname{bd}(K)$ we take the unique
supporting hyperplane given by an affine function
$L_q \colon \R^d \to \R$ such that
$L_q$ minimizes the derivative in the normal direction to
$\operatorname{bd}(K)$.
The magnitude of the gradient of $L_q$ must be 
uniformly bounded for $q \in U \cap \operatorname{bd}(K)$ by the same argument
that guaranteed nonvertical supporting hyperplanes.
Define
\begin{equation}
g(x) :=
\begin{cases}
\sup \{ L_q(x) : q \in U \cap \operatorname{bd}(K) \} & \text{if $x \in U \setminus K$}, \\
\widehat{f}(x) & \text{if $x \in U \cap K$}.
\end{cases}
\end{equation}
It is not hard to check that $g$ must be a convex function.
Furthermore, $g$ has a unique subgradient at $p$ (and in fact
at all $q \in U \cap \operatorname{bd}(K)$).  To see this fact, note that
any other possible subgradient must be tangent to the graph
of $f$ over $\operatorname{bd}(K)$ and hence would give a candidate for $L_p$,
which is unique.
\end{proof}

We now prove Corollary~\ref{cor:reg}.  We look at how the
function $g$ was constructed above.  We assume that
the boundary $\operatorname{bd}(K)$ is $C^1$ and
the function $f$ is $C^1$ on all of $\operatorname{bd}(K)$.
As $\operatorname{bd}(K)$ is compact,
the magnitude of the gradient
of $L_q$ must be uniformly bounded for all $q \in \operatorname{bd}(K)$.
As $g$ was constructed by taking a supremum over $L_q$,
this supremum must be bounded on all of $\R^d$.

The function $g$ constructed above is the smallest possible
extension to $\R^d \setminus K$, because every convex extension
must lie above the subgradients.  To see that $g$ need not
be differentiable take the function whose graph is a union of lines
all going through a single point $p \notin K$.  We can arrange such
a function to be convex and $C^1$ on $K$.  It is then
obvious that the function is equal to the $g$ above and we can
arrange it to not be differentiable at $p$.


\section{Examples} \label{section:examples}

\begin{example}[Punctured tomato can] \label{tomato:example}
Defining $f$ on only a subset of the boundary (including the extremal set)
is not enough to guarantee continuity of the convex roof.
Let $(x,y,z) \in \R^3$ be our coordinates.
Let
\begin{equation}
\begin{aligned}
& K_1 = \{ (x,y,z) \mid x=-1, y^2+z^2 = 1 \}, \\
& K_2 = \{ (x,y,z) \mid x=1, y^2+z^2 = 1 \}, \\
& K_3 = \{ (0,0,1) \} .
\end{aligned}
\end{equation}
The convex hull of the union is the cylinder
\begin{equation}
\operatorname{co}(K_1 \cup K_2 \cup K_3)
=
\{ (x,y,z) \mid -1 \leq x \leq 1, y^2+z^2 \leq 1 \}.
\end{equation}

Define $f$ to be identically 1 on $K_1$ and $K_2$
and let $f$ be zero on $K_3$.  Then $f$ is continuous
and convex on $K_1 \cup K_2 \cup K_3$.
Lemma~\ref{lemma:hatcontext} tells us that $\widehat{f}$ is
lower semicontinuous.

The convex roof $\widehat{f}$ must be identically 1
on the lines $\{ (x,y,z) \mid -1 \leq x \leq 1, y= y_0, z=z_0 \}$
where $y_0^2+z_0^2 = 1$ are fixed and $y_0 \not= 0$.
On the other hand, at the point $(0,0,1)$, which lies
on the line
$\{ (x,y,z) \mid -1 \leq x \leq 1, y= 0, z=1 \}$, the
function $\widehat{f}$ must be 0.  Hence $\widehat{f}$ cannot
be upper semicontinuous at $(0,0,1)$.

In this case,
note that a continuous extension does exist.  For example
the function $x^2$.  We have only shown that the convex roof construction is
not continuous.
\end{example}

\begin{example} \label{nonclosed:example}
As the set of extreme points may not be closed, when we define a function
only on the set of extreme points, it is natural to require that the
function be uniformly continuous such that it extends to the closure of
the extreme points.  In this case however the extension to
the closure can fail to be convex.  For example take again
$(x,y,z) \in \R^3$ be our coordinates and
let
\begin{equation}
\begin{aligned}
& K_1 = \{ (-1,0,0) \}, \\
& K_2 = \{ (x,y,z) \mid x=0, (y-1)^2+z^2 = 1 , y \not= 0 \}, \\
& K_3 = \{ (1,0,0) \} .
\end{aligned}
\end{equation}
Define the function to be 0 at $K_1$ and $K_3$, and let it be identically 1
on $K_2$.  The function is convex on $K_1 \cup K_2 \cup K_3$ and uniformly
continuous, however the continuous extension to
$K_1 \cup K_2 \cup K_3$ fails to be convex because the function
will be 1 at $(1,0,0)$, while it will be 0 at $(-1,0,0)$ and $(1,0,0)$.
\end{example}

A natural question to ask is what happens then when the function is
defined, continuous, and convex on the closure of the extreme points.
Lima~\cite{Lima} shows that
then any continuous function on the set of extreme points extends
to a continuous convex function on the convex hull
if and only if the set of extreme points is closed.
Let us therefore see an example where the set of extreme points is not
closed.
Let us combine examples \ref{nonclosed:example} and \ref{tomato:example}.

\begin{example}
Take again
$(x,y,z,w) \in \R^4$ be our coordinates and
let
\begin{equation}
\begin{aligned}
& K_1 = \{ (x,y,z,w) \mid x=-1, y^2+(z-1)^2 = 1 , w = 0 \}, \\
& K_2 = \{ (x,y,z,w) \mid x=0,  y^2+(w-1)^2 = 1 , z = 0 \}, \\
& K_3 = \{ (x,y,z,w) \mid x=1,  y^2+(z-1)^2 = 1 , w = 0 \} .
\end{aligned}
\end{equation}
Define the function $f$ to be identically 1 on $K_1$ and $K_3$,
and let it be identically 0 on $K_2$.  Let $K =
\operatorname{co}(K_1 \cup K_2 \cup K_3)$.  The set of
extreme points of $K$ is $K_1 \cup K_2 \cup K_3 \setminus \{ (0,0,0,0) \}$.
The function $f$ is continuous, convex, and defined precisely on
the closure of the extreme points.

As $\{ w=0 \}$ is a supporting hyperplane,
the convex roof
construction done in $\{ w = 0 \}$ is equal to the convex roof construction
done in $\R^4$ and restricted to 
$\{ w = 0 \}$.  On $\{ w = 0 \}$ we are 
in the situation of Example~\ref{tomato:example}, and thus
$\widehat{f}$ is not continuous.
\end{example}

In the above two examples, there always existed some continuous convex
extension.  However, the following modification of
Example~\ref{tomato:example} shows that this is not always true either.

\begin{example} \label{noext:example}
Let $(x,y,z) \in \R^3$ be our coordinates.
Let
\begin{equation}
\begin{aligned}
& K_1 = \{ (x,y,z) \mid -1 \leq x \leq -z , y^2+z^2 = 1 \} \cup
 \{ (x,y,z) \mid x=1, y^2+z^2 = 1 \}, \\
& K_2 = \{ (0,0,1) \} .
\end{aligned}
\end{equation}
See Figure~\ref{fig:tomatocan}.
The convex hull of the union is again cylinder
\begin{equation}
\operatorname{co}(K_1 \cup K_2)
=
\{ (x,y,z) \mid -1 \leq x \leq 1, y^2+z^2 \leq 1 \}.
\end{equation}

\begin{figure}[ht]
\includegraphics{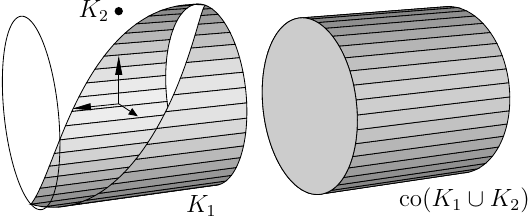}
\caption{The sets $K_1$ and $K_2$ and their convex hull.\label{fig:tomatocan}}
\end{figure}

Define $f$ to be identically 1 on $K_1$,
and let $f$ be zero on $K_2$.  Then $f$ is obviously continuous.
Also $f$ is convex as it is the restriction of the convex
roof construction from Example~\ref{tomato:example}.

On a line 
$\{ (x,y,z) \mid -1 \leq x \leq 1, y= y_0, z=z_0 \}$, $y_0 \not= 0$,
any convex extension must be identically 1,
since $f$ is 1 at the endpoints and
identically 1 for all $x \in [-1,-z_0]$.  Therefore, any
convex extension will fail to be continuous at $(0,0,1)$.
\end{example}

\begin{example} \label{noreg:example}
Let us construct a
$C^\infty$ function on the set where $x^4+y^4=1$ whose convex roof extension
is not Lipschitz on the convex hull.

Let $K = \{ (x,y) : x^4+y^4 \leq 1 \}$.
Define $f \colon \operatorname{bd}(K) \to \R$ by 
$f(x,y) = 1-\sqrt{1-y^4}$.
The function $f$ is $C^\infty$; the only issue arises at $(0,\pm 1)$,
by parametrizing $\operatorname{bd}(K)$ near $(0,1)$ by $(x,{(1-x^4)}^{1/4})$,
we notice that on $\operatorname{bd}(K)$ near $(0,1)$, $f$
is given by $1-x^2$.

It is also not hard to see that
$\widehat{f}(x,y) = 1-\sqrt{1-y^4}$.  This is because for a
fixed $y$, being constant is the largest that $\widehat{f}$ can be and
$\widehat{f}$ is the largest convex extension.  But that means that the
derivative of $\widehat{f}$ goes to infinity when we approach $(0,\pm 1)$
from
the inside of the disc, and hence $\widehat{f}$ is not Lipschitz on
$K$.  See Figure~\ref{fig:potatochip}.

\begin{figure}[ht]
\includegraphics{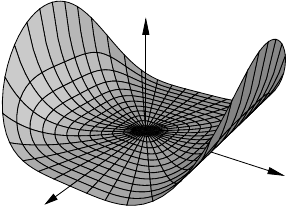}
\caption{$\widehat{f}(x,y) = 1-\sqrt{1-y^4}$ on $x^4+y^4\leq 1$.\label{fig:potatochip}}
\end{figure}

As $\widehat{f}$ is the largest convex extension,
it is not hard to see that any other convex extension of $f$ is also
not Lipschitz (the derivative must also blow up at
$(0,\pm 1)$).
\end{example}

\begin{example} \label{example:noc2}
It is rather surprising that even if $f$ is $C^\infty$ on
the boundary, the convex roof need not be $C^2$ on the interior
(though it must be $C^1$).
Let $K = \{ (x,y) : x^2+y^2 \leq 1 \}$.
Let $f \colon \operatorname{bd}(K) \to \R$ be defined
by $(x+1)x^2$.  Obviously $f$ is $C^\infty$ (in fact real-analytic)
on $\operatorname{bd}(K)$.  By Theorem~\ref{thm:reg} $\widehat{f} \in C^1(K)$.

We now note that on the convex hull of the points $(0,1)$, $(0,-1)$, and
$(-1,0)$, the convex roof $\widehat{f}$ must be identically zero.  On the
other hand, by a similar argument as above, for $x \geq 0$, we have
$\widehat{f}(x,y) = (x+1)x^2$.  This means that for example on the line
$y=0$, the function $\widehat{f}$ is identically zero for $x \leq 0$ and
$(x+1)x^2$ for $x \geq 0$.  Therefore $\widehat{f}$ is not $C^2$ at the
origin (and in fact along the line $x=0$).
\end{example}


\section{Application to quantum computing} \label{section:apptoquant}

The convex roof extension was  introduced by Uhlmann~\cite{uhlmann2} as a
way of defining a different measure of entanglement on the set of density
operators.

Given a quantum system there is associated to it a
complex Hilbert space $\sH$
so that all the possible states of that quantum system
are in 1-1 correspondence with the set of 1-dimensional subspaces of $\sH$,
that is, the projective space
$\bP(\sH)$.  By assigning the orthogonal projection onto
a 1-dimensional subspace we obtain a bijection between such orthogonal
projectors
and $\bP(\sH)$.   In particular if $\rho$ is such a projector
it has trace 1.
 
Mixed states of a quantum system are probabilistic averages of pure states.
The mixed states of a quantum system are in 1-1
correspondence with the set of positive semidefinite operators on
$\sH$ of trace $1$.  This set is a compact convex subset
with nonempty interior of the set of Hermitian operators of trace $1$,
which in turn is an affine subspace of the set of complex linear
operators on $\sH$.

Any positive semidefinite operator of trace 1 is a convex
combination of projectors of rank 1.
Hence the set extreme points of the convex set of states (the mixed
states) consists of the orthogonal projectors of rank $1$, the so-called
pure states.
Note also that for any positive
semidefinite operator $\rho$ of trace 1 we have the inequality
$\operatorname{Trace}(\rho^2) \leq 1$ with equality if and only if $\rho$ is
a projector of rank 1.  Therefore the pure states are the relative boundary
of the set of all states.
The set of pure states can be identified with the set of
nonzero vectors in $\sH$ modulo multiplication by nonzero complex
scalars.

Suppose that $\sH$ is a tensor product of two
Hilbert spaces $\sH_1$, $\sH_2$, 
\begin{equation}
\mathcal{H} = \mathcal{H}_1\otimes \mathcal{H}_2 .
\end{equation}
A pure state given by the vector $v$ is called a product state (or
unentangled) if $v$ is a tensor product of two vectors
$v_i\in \mathcal{H}_i$ 
\begin{equation}
v = v_1 \otimes v_2 .
\end{equation}
This decomposition
does not depend on the representative chosen; a
vector is a tensor product then any multiple of it is again a tensor
product. 

We call a pure state entangled if (any) representative vector is
\emph{not} a tensor product. 
We consider functions defined on the set of pure states that are zero on the
set of unentangled states and larger than zero on the set of entangled
states.
This is called a measure of entanglement. 

The question posed by Uhlmann is as follows: How to extend a measure of
entanglement from pure states to mixed states?  In other words, how to
extend a function from the (relative) boundary of the set of states to the
whole set.  The method proposed is the convex roof construction. 

This was later applied to extend particular measures of entanglement, for
instance the Von Neumann entropy
(see Nielsen and Chuang~\cite{NielsenChuang}) or linear entropy. 

Let us analyse a bit the common structure for these
measures of entanglement.
Recall (also see~\cite{NielsenChuang}) that for every vector $v$ there
exists an integer $r \geq 1$, strictly positive numbers $\lambda_1 \geq
\lambda_2 \geq \cdots \geq \lambda_r$, and orthonormal
systems $e_1$, \ldots,
$e_r$ in $\mathcal{H}_1$ and $f_1$, \ldots, $f_r$ in $\mathcal{H}_2$ such
that 
\begin{equation}
v = \sum_{i=1}^r \sqrt{\lambda_i} \, e_i \otimes f_i
\qquad \text{(the Schmidt decomposition)}.
\end{equation}
The integer $r$ and the $\lambda_j$ are uniquely determined.
The vector $v$ is a tensor product if and only if $r=1$. 

Assuming $\snorm{v} = 1$
we can measure how ``far'' is $r$ from $1$ by, for example, considering a
function $\phi$ with some extra properties
($\phi(0)=0$, $\phi \geq 0$, etc\ldots)\ and then defining
$f(v) = \sum \phi(\lambda_i)$, or a function of
this sum.  For example, the linear entropy is defined by 
\begin{equation}
f(v) = \sqrt{1- \sum_{i=1}^r \lambda_i^2} .
\end{equation}

The function $f$ is zero if and only if exactly one of the
numbers $\lambda_i$ is $1$. 
One also notices that $f$ is continuous.
In the case when $\sH_1$ and $\sH_2$ are both of dimension $2$,
there exists  a
closed formula for the convex roof extension of the linear entropy due to
Wootters~\cite{Wootters}.
Note that such a closed formula also describes the set of separable mixed
states, that is, mixed states that are convex combination of pure product
states. 

However, in the study of measures of entanglement there is an acute lack of
explicit formulas for the convex roof extensions.
A consequence of our results is that even if explicit formulas are unknown,
still one concludes that all the measures of entanglements defined using the
convex roof construction are continuous if the measure is continuous on
the set of pure states.

\begin{thm}
Let $f$ be a continuous measure of entanglement defined on the pure states,
then the convex roof extension $\hat{f}$ is a continuous function of all
states.
\end{thm}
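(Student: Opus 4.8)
The plan is to deduce this as an immediate corollary of Theorem~\ref{thm:mainthm} (equivalently, of the strict-convexity corollary above), so that almost all the work consists of translating the quantum-mechanical setup into the convex-geometric language of the paper. First I would recall, as in the discussion above, that the set $K$ of mixed states is a compact convex subset with nonempty interior of the finite-dimensional real affine space of Hermitian operators of trace $1$, and that the pure states are precisely its relative boundary $\partial K$. Thus a measure of entanglement $f$ ``defined on the pure states'' is exactly a real-valued function on the closed set $\partial K$, and the $\widehat f$ of the statement is its convex roof in the sense of Section~\ref{section:convexroof}, a function on $\operatorname{co}(\partial K)$.

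Next I would check the two geometric facts needed to invoke the main theorem. The first is that $f$ is automatically convex on $\partial K$: if $A\neq B$ are distinct rank-$1$ projectors then their ranges differ, so $\frac12 A+\frac12 B$ has rank $2$ and is not a pure state; hence the set of pure states contains no line segment, and (as noted just after the strict-convexity corollary) \emph{every} function on it — in particular the continuous $f$ — is convex. The second is that $\operatorname{co}(\partial K)=K$, which holds for any compact convex set of positive dimension because every point lies on a chord with endpoints in the relative boundary; so $K$ really is the convex hull of the pure states.

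With these in hand I would apply Theorem~\ref{thm:mainthm} with $C=\partial K$: the set $C=\partial K$ is closed (as $K$ is compact), trivially $\partial K\subset C$, and $f\colon C\to\R$ is continuous and convex, so its convex roof extends using the convex roof construction to a continuous convex function on $\operatorname{co}(C)=K$. Since $\widehat f$ is real-valued, bounded, and restricts to $f$ on the pure states (Proposition~\ref{prop:hatext}), this is exactly the assertion that the convex roof extension of the entanglement measure is a continuous function of all states. (Alternatively, having observed that $K$ is strictly convex, one may simply quote the strict-convexity corollary verbatim.)

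I do not expect a genuine obstacle here: all of the analysis is already packaged inside Theorem~\ref{thm:mainthm} — in Lemma~\ref{lemma:hatcontext} (lower semicontinuity of $\widehat f$), Lemma~\ref{lemma:contatextreme} (continuity at the extreme points, i.e.\ at the pure states themselves), and Lemma~\ref{lemma:bndrycont} (propagating continuity from the relative boundary to the interior). The one point that requires care is that $f$ must be given on \emph{all} of the relative boundary: Example~\ref{tomato:example} shows that a convex function specified on only part of $\partial K$ can have a discontinuous convex roof, so the identification of the pure states with the full relative boundary $\partial K$ is precisely what makes the theorem applicable in this setting.
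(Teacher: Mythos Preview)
Your argument follows the paper's own reasoning exactly, but that reasoning contains a genuine gap which you have inherited rather than introduced. The identification of the pure states with the \emph{full} relative boundary $\partial K$ is false as soon as $\dim\sH \geq 3$, and in the entanglement setting $\sH = \sH_1 \otimes \sH_2$ one always has $\dim\sH \geq 4$. The rank-$1$ projectors are the \emph{extreme points} of $K$, whereas $\partial K$ consists of all \emph{singular} density operators: for instance $\operatorname{diag}(\tfrac12,\tfrac12,0,\ldots,0)$ lies in $\partial K$ but has rank $2$. The segment $t \mapsto \operatorname{diag}(t,1-t,0,\ldots,0)$ lies entirely in $\partial K$, so $K$ is not strictly convex, and neither Theorem~\ref{thm:mainthm} with $C=\{\text{pure states}\}$ (which fails the hypothesis $\partial K\subset C$) nor the strict-convexity corollary applies as written. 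The paper's sentence ``Therefore the pure states are the relative boundary of the set of all states'' conflates the boundary of the Hilbert--Schmidt ball $\{\operatorname{Tr}(\rho^2)\le 1\}$ with the boundary of $K$; the two coincide only for a single qubit.

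The conclusion is nonetheless true, but more is needed to reach it. One route: the extreme-point set of $K$ is closed (it is the compact manifold $\bP(\sH)$), so Lima's theorem cited in the introduction produces \emph{some} continuous convex extension, and one then argues that the convex roof coincides with it. A more self-contained route is to induct on $\dim\sH$ via the face structure: each proper face of $K$ is the state space of a lower-dimensional Hilbert space, and because faces are extreme subsets, any pure-state decomposition of a point in a face stays in that face, so $\widehat f$ restricted to a face equals the convex roof computed inside that face. By induction $\widehat f$ is continuous on every face, and after checking continuity across faces one obtains continuity of $\widehat f|_{\partial K}$; then Lemma~\ref{lemma:bndrycont} (together with Lemma~\ref{lemma:hatcontext}) finishes. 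Either way, the step ``pure states $=\partial K$'' cannot stand, and your final paragraph's caution about Example~\ref{tomato:example} is exactly on point: here $f$ \emph{is} specified on only part of $\partial K$.
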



\def\MR#1{\relax\ifhmode\unskip\spacefactor3000 \space\fi%
  \href{http://www.ams.org/mathscinet-getitem?mr=#1}{MR#1}}

\begin{bibdiv}
\begin{biblist}

\bib{GKR}{article}{
   author={Gale, David},
   author={Klee, Victor},
   author={Rockafellar, R. T.},
   title={Convex functions on convex polytopes},
   journal={Proc.\ Amer.\ Math.\ Soc.},
   volume={19},
   date={1968},
   pages={867--873},
   issn={0002-9939},
   review={\MR{0230219}},
}

\bib{Lima}{article}{
   author={Lima, {\.A}svald},
   title={On continuous convex functions and split faces},
   journal={Proc.\ London Math.\ Soc.\ (3)},
   volume={25},
   date={1972},
   pages={27--40},
   issn={0024-6115},
   review={\MR{0303243}},
}

\bib{NielsenChuang}{book}{
   author={Nielsen, Michael A.},
   author={Chuang, Isaac L.},
   title={Quantum computation and quantum information},
   publisher={Cambridge University Press},
   place={Cambridge},
   date={2000},
   pages={xxvi+676},
   isbn={0-521-63235-8},
   isbn={0-521-63503-9},
   review={\MR{1796805}},
}

\bib{peterswakker}{article}{
  author={Peters, H.J.M.},
  author={Wakker, P.P.},
  title={Convex functions on non-convex domains},
  journal={Economics Letters},
  volume={22},
  year={1987},
  pages={251--255},
}

\bib{Rockafellar:book}{book}{
   author={Rockafellar, R. Tyrrell},
   title={Convex analysis},
   series={Princeton Mathematical Series, No.\ 28},
   publisher={Princeton University Press},
   place={Princeton, N.J.},
   date={1970},
   pages={xviii+451},
   review={\MR{0274683}},
}

\bib{uhlmann1}{article}{
   author={Uhlmann, Armin},
   title={Entropy and Optimal Decompositions of States Relative to a Maximal Commutative Subalgebra},
   journal={Open Sys.\ \& Information Dyn.},
   publisher={Springer Netherlands},
   pages={209--228},
   volume={5},
   number={3},
   doi={10.1023/A:1009664331611},
   year={1998}
}

\bib{uhlmann2}{article}{
  author={Uhlmann, Armin},
  title={Roofs and Convexity},
  journal={Entropy},
  volume={12},
  year={2010},
  number={7},
  pages={1799--1832},
  doi={10.3390/e12071799}
}

\bib{Wootters}{article}{
  author={Wootters, W.K.},
  title={Entanglement of formation of an arbitrary state of two qubits},
  journal={Phys.\ Rev.\ Lett.},
  volume={80},
  year={1998},
  pages={2243--2248},
}

\end{biblist}
\end{bibdiv}

\end{document}